\documentclass{article}
\usepackage[a4paper]{geometry}
\usepackage[fleqn]{amsmath}
\usepackage{amssymb,amsthm,doi,enumitem,environ,float,color,mathrsfs,mleftright,subcaption,tikz,tikz-3dplot}
\usepackage[initials]{amsrefs}

\newtheorem{theorem}{Theorem}[section]
\newtheorem{lemma}[theorem]{Lemma}
\newtheorem{proposition}[theorem]{Proposition}

\newtheorem{question}[theorem]{Question}
\newtheorem{conjecture}[theorem]{Conjecture}
\theoremstyle{definition}
\newtheorem*{definition}{Definition}

\theoremstyle{plain}

\newtheorem{innerthm}{Theorem}
\newenvironment{maintheorem}[1]
  {\renewcommand\theinnerthm{#1}\innerthm}
  {\endinnerthm}
  
\newtheorem{innercor}{Corollary}
\newenvironment{maincorollary}[1]
  {\renewcommand\theinnercor{#1}\innercor}
  {\endinnercor}  

\graphicspath{ {figures/} }

\numberwithin{equation}{section}

\renewcommand{\geq}{\geqslant}
\renewcommand{\leq}{\leqslant}
\DeclareMathOperator{\Ex}{Ex}

\setlist[enumerate]{leftmargin=20pt,itemsep=0pt,topsep=0pt}
\setlist[enumerate,1]{label=\emph{(\roman*)},ref={(\roman*)}}
\setlist[enumerate,2]{label=\emph{(\alph*)},ref={(\alph*)}}

\renewcommand{\Re}{\operatorname{Re}}

\makeatletter
\renewcommand\section{\@startsection {section}{1}{\z@}%
                                   {-3.5ex \@plus -1ex \@minus -.2ex}%
                                   {1.3ex \@plus.2ex}%
                                   {\normalfont\bf\large}}
\makeatother

\makeatletter
\renewcommand\subsection{\@startsection {subsection}{1}{\z@}%
                                   {-3.5ex \@plus -1ex \@minus -.2ex}%
                                   {0.1ex \@plus.2ex}%
                                   {\normalfont\bf\normalsize}}
\makeatother

\usetikzlibrary{calc}
\usetikzlibrary{matrix}
\usetikzlibrary{decorations.pathreplacing,decorations.markings}
\usetikzlibrary{arrows.meta}
\usetikzlibrary{arrows,shapes,positioning}
\tikzstyle arrowstyle=[scale=1]

\tikzset{
	midarrow/.style={
		postaction={decorate,decoration={markings,mark=at position 0.5 with {\arrow{Stealth[length=#1,width=#1]}}}}
	}
}

\tikzset{pointer/.style 2 args={draw,fill,single arrow,
    single arrow tip angle=45,
    single arrow head extend=#1,
    single arrow head indent=0pt,
    inner sep=0pt,
    rotate=#2}}

\definecolor{gammacol}{RGB}{33,120,33}
\definecolor{deltacol}{RGB}{44,90,160}
\definecolor{alphacol}{RGB}{170,0,0}
\definecolor{betacol}{RGB}{171,55,200}

\tikzset{
  gridline/.style={black,line width=0.35pt},
  pathline/.style={black,line width=1.05pt},
  dot/.style={circle,fill=black,inner sep=1.55pt},
  rational/.style={black,font=\scriptsize,inner sep=1pt},
  gint/.style={blue!70!black,font=\scriptsize,inner sep=1pt},
  directed/.style={
    postaction={decorate},
    decoration={markings,mark=at position .55 with {\arrow{Stealth[length=2.0mm,width=1.35mm]}}}
  },
  directedtwo/.style={
    postaction={decorate},
    decoration={markings,
      mark=at position .33 with {\arrow{Stealth[length=2.0mm,width=1.35mm]}},
      mark=at position .67 with {\arrow{Stealth[length=2.0mm,width=1.35mm]}}}
  }
}

\newcommand{\Nlab}{N}
\newcommand{\VertexLabelFont}{\tiny}
\newcommand{\TriangulationLineWidth}{0.35pt}

\tikzset{
  triangulation/.style={line cap=round,line join=round,line width=\TriangulationLineWidth},
  edge/.style={triangulation},
  vlab/.style={font=\VertexLabelFont,inner sep=0.6pt,outer sep=0pt}
}

\newlength{\VertexLabelGap}
\newlength{\CrowdedVertexLabelGap}
\tikzset{
  triangulation/.style={
    line width=.35pt,
    line cap=round,
    line join=round
  },
  edge/.style={triangulation},
  vlabel/.style={
    font=\VertexLabelFont,
    inner sep=.35pt,
    outer sep=0pt
  }
}

\newcommand{\CoordList}[2]{%
  \foreach \i/\x/\y in {#2}{\coordinate (#1\i) at (\x,\y);}%
}

\newif\ifMirrorFigure
\tikzset{mirror/.code={\MirrorFiguretrue\pgfkeysalso{xscale=-1}}}
\newcommand{\VLabel}[4][\VertexLabelGap]{%
  \pgfmathsetmacro{\LabelAngle}{\ifMirrorFigure 180-(#3)\else #3\fi}%
  \node[
    inner sep=0pt,
    outer sep=0pt,
    label={[vlabel,label distance=#1]\LabelAngle:{$#4$}}
  ] at (#2) {};%
}

\newcommand{\VLabelList}[2]{%
  \foreach \i/\angle/\text in {#2}{\VLabel{#1\i}{\angle}{\text}}%
}

\newcommand{\PairedLabel}[5][\VertexLabelGap]{%
  \VLabel[#1]{#2}{#4}{#5}%
  \pgfmathsetmacro{\OppositeLabelAngle}{#4+180}%
  \VLabel[#1]{#3}{\OppositeLabelAngle}{#5}%
}

\newcommand{\ReflectChain}[3]{%
  \foreach \i in {0,...,#3}{%
    \coordinate (#2\i) at ($(#1\i)!2!(0,0)$);%
  }%
}

\newcommand{\DrawFan}[3]{%
  \pgfmathtruncatemacro{\lastedge}{#3-1}%
  \draw[edge] (#1)--(#20);
  \foreach \i [evaluate=\i as \j using int(\i+1)] in {0,...,\lastedge}{%
    \draw[edge] (#2\i)--(#2\j) (#1)--(#2\j);%
  }%
}

\newcommand{\DrawFanWithEars}[4]{%
  \pgfmathtruncatemacro{\lastedge}{#4-1}%
  \draw[edge] (#1)--(#20);
  \foreach \i [evaluate=\i as \j using int(\i+1)] in {0,...,\lastedge}{%
    \draw[edge]
      (#2\i)--(#2\j)
      (#1)--(#2\j)
      (#2\i)--(#3\i)--(#2\j);%
  }%
}

\newcommand{\DrawFanWithDoubleEars}[5]{%
  \pgfmathtruncatemacro{\lastedge}{#5-1}%
  \draw[edge] (#1)--(#20);
  \foreach \i [evaluate=\i as \j using int(\i+1)] in {0,...,\lastedge}{%
    \draw[edge]
      (#2\i)--(#2\j)
      (#1)--(#2\j)
      (#2\i)--(#3\i)--(#2\j)
      (#2\i)--(#4\i)--(#3\i);%
  }%
}

\newcommand{\PolySub}[3][.31\linewidth]{%
  \begin{subfigure}[t]{#1}
    \centering
    #2
    \caption{#3}
  \end{subfigure}%
}

\newcommand{\PolyA}{%
\begin{tikzpicture}[triangulation]
  \foreach \i/\angle in {0/210,1/330,2/90}{%
    \coordinate (a\i) at (\angle:.55);
    \VLabel{a\i}{\angle}{1}
  }
  \draw[edge] (a0)--(a1)--(a2)--cycle;
\end{tikzpicture}%
}

\newcommand{\PolyB}{%
\begin{tikzpicture}[triangulation]
  \foreach \i/\angle/\text in {0/225/1,1/315/2,2/45/1,3/135/2}{%
    \coordinate (b\i) at (\angle:.7778);
    \VLabel{b\i}{\angle}{\text}
  }
  \draw[edge] (b0)--(b1)--(b2)--(b3)--cycle (b3)--(b1);
\end{tikzpicture}%
}

\newcommand{\Hexagon}[2]{%
  \foreach \i in {0,...,5}{\coordinate (#1\i) at ({180-60*\i}:1);}%
  \draw[edge] (#10)--(#11)--(#12)--(#13)--(#14)--(#15)--cycle;
  \foreach \i/\text in {#2}{\VLabel{#1\i}{180-60*\i}{\text}}%
}

\newcommand{\PolyC}{%
\begin{tikzpicture}[triangulation]
  \Hexagon{c}{0/3,1/1,2/3,3/1,4/3,5/1}
  \draw[edge] (c0)--(c2) (c0)--(c4) (c2)--(c4);
\end{tikzpicture}%
}

\newcommand{\PolyD}{%
\begin{tikzpicture}[triangulation,mirror]
  \Hexagon{d}{0/1,1/2,2/3,3/1,4/2,5/3}
  \draw[edge] (d1)--(d5) (d2)--(d5) (d2)--(d4);
\end{tikzpicture}%
}

\newcommand{\PolyE}{%
\begin{tikzpicture}[triangulation,mirror]
  \CoordList{e}{%
    0/-.75/-1.7321, 1/-.25/-.8660, 2/.75/-.8660,
    3/1.25/0,      4/.25/0,        5/-.25/.8660,
    6/-1.25/.8660, 7/-.75/0,       8/-1.25/-.8660}
  \draw[edge]
    (e0)--(e1)--(e2)--(e3)--(e4)--(e5)--(e6)--(e7)--(e8)--cycle
    (e1)--(e8) (e7)--(e1) (e7)--(e4) (e7)--(e5)
    (e1)--(e4) (e2)--(e4);
  \VLabelList{e}{%
    0/270/1, 1/300/4, 2/300/2,
    3/30/1,  4/60/4,  5/60/2,
    6/150/1, 7/180/4, 8/180/2}
\end{tikzpicture}%
}

\newcommand{\PolyF}{%
\begin{tikzpicture}[triangulation,mirror]
  \CoordList{f}{%
    0/1.75/-1.7321, 1/.75/-1.7321, 2/.25/-.8660,
    3/-.75/-.8660, 4/-1.75/-.8660,5/-1.25/0,
    6/-.25/0,      7/.25/.8660,   8/.75/1.7321,
    9/1.25/.8660,  10/.75/0,      11/1.25/-.8660}
  \draw[edge]
    (f0)--(f1)--(f2)--(f3)--(f4)--(f5)--(f6)--(f7)--(f8)--(f9)--(f10)--(f11)--cycle
    (f1)--(f11) (f2)--(f11) (f10)--(f2) (f10)--(f6)
    (f10)--(f7) (f7)--(f9) (f2)--(f6) (f3)--(f5) (f3)--(f6);
  \VLabelList{f}{%
    0/330/1, 1/240/2, 2/260/4, 3/270/3,
    4/210/1, 5/120/2, 6/100/4, 7/150/3,
    8/90/1,  9/0/2,   10/0/4,  11/30/3}
\end{tikzpicture}%
}

\newcommand{\PolyG}{%
\begin{tikzpicture}[triangulation,scale=.54,mirror]
  \coordinate (gA) at (-1.5,-.7);
  \coordinate (gB) at ( 1.5, .7);
  \CoordList{gP}{%
    0/-4.036/1.428, 1/-3.003/2.250, 2/-1.731/2.602,
    3/-.423/2.430,  4/.715/1.761}
  \coordinate (gP5) at (gB);
  \ReflectChain{gP}{gQ}{5}

  \DrawFan{gA}{gP}{5}
  \DrawFan{gB}{gQ}{5}

  \PairedLabel[\CrowdedVertexLabelGap]{gA}{gB}{223.25}{\Nlab}
  \foreach \i/\angle/\text in {0/180/1,1/117/2,2/94/2,3/71/2,4/48/2}{%
    \PairedLabel{gP\i}{gQ\i}{\angle}{\text}
  }
\end{tikzpicture}%
}

\newcommand{\PolyH}{%
\begin{tikzpicture}[triangulation,mirror]
  \coordinate (hA) at (.62,-.372);
  \coordinate (hB) at (-.62,.372);
  \CoordList{hP}{%
    0/1.961/.170, 1/1.599/.693, 2/1.047/1.010,
    3/.413/1.059, 4/-.181/.832}
  \coordinate (hP5) at (hB);
  \CoordList{hT}{%
    0/2.233/.745, 1/1.597/1.329, 2/.773/1.583,
    3/-.081/1.460,4/-.799/.982}
  \ReflectChain{hP}{hQ}{5}
  \ReflectChain{hT}{hU}{4}

  \DrawFanWithEars{hA}{hP}{hT}{5}
  \DrawFanWithEars{hB}{hQ}{hU}{5}

  \PairedLabel[\CrowdedVertexLabelGap]{hA}{hB}{334.18}{\Nlab}
  \PairedLabel{hP0}{hQ0}{313.35}{2}
  \foreach \i [evaluate=\i as \angle using 22.04+25.40*\i] in {1,4}{%
    \PairedLabel[0.7mm]{hP\i}{hQ\i}{\angle}{4}
  }
  \foreach \i [evaluate=\i as \angle using 25+21*\i] in {2,3}{%
    \PairedLabel[0.7mm]{hP\i}{hQ\i}{\angle}{4}
  }
   \foreach \i [evaluate=\i as \angle using 34.69+25.40*\i] in {0,...,4}{%
    \PairedLabel{hT\i}{hU\i}{\angle}{1}
  }
\end{tikzpicture}%
}

\newcommand{\PolyI}{%
\begin{tikzpicture}[triangulation,mirror]
  \coordinate (iA) at (-.65,-.325);
  \coordinate (iB) at ( .65, .325);

  \def\R{1.453444185}
  \def\Step{25}
  \def\ThetaB{26.565051177}
  \def\EarHeight{.5}
  \def\TipHeight{.23}

  \pgfmathsetmacro{\sx}{(\R+\EarHeight)*cos(\Step/2)}
  \pgfmathsetmacro{\sy}{-(\R+\EarHeight)*sin(\Step/2)}
  \pgfmathsetmacro{\dx}{\sx-\R}
  \pgfmathsetmacro{\dy}{\sy}
  \pgfmathsetmacro{\baselen}{veclen(\dx,\dy)}
  \pgfmathsetmacro{\tx}{(\R+\sx)/2-\TipHeight*\dy/\baselen}
  \pgfmathsetmacro{\ty}{\sy/2+\TipHeight*\dx/\baselen}

  \foreach \j [evaluate=\j as \angle using \ThetaB+(5-\j)*\Step] in {0,...,4}{%
    \begin{scope}[shift={(iA)},rotate=\angle]
      \coordinate (iP\j) at (\R,0);
      \coordinate (iS\j) at (\sx,\sy);
      \coordinate (iT\j) at (\tx,\ty);
    \end{scope}
  }
  \coordinate (iP5) at (iB);

  \ReflectChain{iP}{iQ}{5}
  \ReflectChain{iS}{iU}{4}
  \ReflectChain{iT}{iV}{4}

  \DrawFanWithDoubleEars{iA}{iP}{iS}{iT}{5}
  \DrawFanWithDoubleEars{iB}{iQ}{iU}{iV}{5}

  \PairedLabel[\CrowdedVertexLabelGap]{iA}{iB}{204.96}{\Nlab}
  \PairedLabel{iP0}{iQ0}{237.07}{3}

  \foreach \i [evaluate=\i as \angle using 198-45*\i] in {1,...,2}{%
    \PairedLabel[3.5*\CrowdedVertexLabelGap]{iP\i}{iQ\i}{\angle}{5}
  }
  \foreach \i [evaluate=\i as \angle using 107-5*\i] in {3,...,4}{%
    \PairedLabel[3.5*\CrowdedVertexLabelGap]{iP\i}{iQ\i}{\angle}{5}
  }  
  
  \foreach \i [evaluate=\i as \angle using 189.773-25*\i] in {0,...,4}{%
    \PairedLabel{iT\i}{iV\i}{\angle}{1}
  }
  \foreach \i [evaluate=\i as \angle using 117.665-25*\i] in {0,...,4}{%
    \PairedLabel{iS\i}{iU\i}{\angle}{2}
  }
\end{tikzpicture}%
}

\newcommand{\GlobalScale}{0.90}
\pgfmathsetmacro{\ScaleOneOne}{2.55*\GlobalScale}
\pgfmathsetmacro{\ScaleTwoOne}{2.1*\GlobalScale}
\pgfmathsetmacro{\ScaleTwoTwo}{1.72*\GlobalScale}
\pgfmathsetmacro{\ScaleThreeThree}{1.32*\GlobalScale}

\tikzset{
  gridline/.style={black,line width=0.35pt},
  pathline/.style={black,line width=1.05pt,line cap=round,line join=round},
  dot/.style={circle,fill=black,inner sep=1.55pt},
  rational/.style={font=\scriptsize,inner sep=1pt},
  gint/.style={blue!70!black,font=\tiny,inner sep=1pt}
}

\tikzset{
  mid arrow/.style={
    postaction={decorate},
    decoration={
      markings,
      mark=at position 0.6 with {
        \arrow{Stealth[length=2.0mm,width=1.35mm]}
      }
    }
  }
}

\newcommand{\Edge}[2]{\draw[pathline, mid arrow] #1 -- #2;}
\newcommand{\BendEdge}[2]{\draw[pathline, mid arrow] #1 to[bend left=20] #2;}
\newcommand{\Spot}[1]{\node[dot] at #1 {};}
\newcommand{\GridOneOne}{\draw[gridline] (0,0) rectangle (1,1);}

\newcommand{\GridTwoTwo}{\draw[gridline] (-1,-1) rectangle (1,1);\draw[gridline] (-1,0)--(1,0);\draw[gridline] (0,-1)--(0,1);}
\newcommand{\GridThreeThree}{\draw[gridline] (-2,-2) rectangle (1,1);\draw[gridline] (-2,0)--(1,0);\draw[gridline] (-2,-1)--(1,-1);\draw[gridline] (0,-2)--(0,1);\draw[gridline] (-1,-2)--(-1,1);}
\newcommand{\BBoxOneOne}{\path[use as bounding box] (-0.40,-0.34) rectangle (1.40,1.34);}

\newcommand{\BBoxTwoTwo}{\path[use as bounding box] (-1.72,-1.42) rectangle (1.72,1.42);}

\newcommand{\MFig}[2]{%
  \begin{minipage}[t]{0.32\textwidth}\centering
    #1\\[-8pt]{\scriptsize #2}
  \end{minipage}%
}

\newcommand{\MFigWide}[2]{%
  \begin{minipage}[t]{0.48\textwidth}\centering
    #1\\[-8pt]{\scriptsize #2}
  \end{minipage}%
}

\newcommand{\MFigSlim}[2]{%
  \begin{minipage}[t]{0.25\textwidth}\centering
    #1\\[-8pt]{\scriptsize #2}
  \end{minipage}%
}

\AtEndDocument{%
	\par
	\medskip
	\begin{tabular}{@{}l@{}}%
		{Ian Short, Margaret Stanier, Andrei Zabolotskii}\\
		{School of Mathematics and Statistics, The Open University,}\\
		{Milton Keynes, MK7 6AA, United Kingdom}\\
		\textit{E-mail addresses}: \texttt{ian.short@open.ac.uk, margaret.stanier1@open.ac.uk,}\\
		\texttt{andrei.zabolotskii@open.ac.uk}\\	
		\\
		{Matty van Son}\\
		{Department of Computer Science, Mathematics \& Physics, University of the West Indies,}\\
		{Cave Hill Campus, P.O. Box 64, Bridgetown BB11000, Barbados}\\
		\textit{E-mail address}: \texttt{matty.vanson@uwi.edu}
\end{tabular}}

\title{ \vspace{-5ex}\bf \Large Optimal local convergence criteria for integer and Gaussian integer continued fractions}

\makeatletter
\renewcommand*{\@fnsymbol}[1]{\hspace*{-10pt}}
\makeatother

\author{Ian Short, Margaret Stanier, Matty van Son, and Andrei Zabolotskii\thanks{2020 Mathematics Subject Classification: Primary 11A55, 40A15; Secondary 05E16, 11B57.}\thanks{Key words: continued fraction, exclusion set, Farey graph,  quiddity sequence, triangulated polygon.}\thanks{There is no data associated with this article.}}

\date{\vspace{-5ex}}

\begin{document}

\maketitle

\begin{abstract}
The objective of this work is to determine optimal local restrictions on the coefficients of integer and  Gaussian integer continued fractions that imply convergence. We identify all minimal restrictions involving words of length two in the integer case, and we identify all reversible minimal restrictions of length two in the Gaussian integer case. In the integer setting, our classification is equivalent to a classification of minimal unavoidable words of length two in Conway--Coxeter quiddity sequences. We also construct a canonical set of restrictions of infinite cardinality that is strictly stronger than every finite set of restrictions.
\end{abstract}

\section{Introduction}

It is well-known that if the coefficients $b_i$ of a complex continued fraction satisfy $|b_i|\geq 2$, then the continued fraction converges. As a consequence, an \emph{integer} continued fraction with only finitely many coefficients  $0$, $1$, or $-1$ must converge. This result can easily be improved; for instance, Katok and Ugarcovici \cite{KaUg2005}*{Lemma~1.1} proved that any integer continued fraction with only finitely many substrings equal to $0$, $(1,b)$, or $(-1,-b)$, for $b>0$, converges. We describe each of the sets of words $\{-1,0,1\}$ and $\{\pm (1,b):b>0\}\cup\{0\}$ as `exclusion sets' because any integer continued fraction that has only finitely many occurrences of substrings from one of these sets converges. Our objective is to obtain optimal exclusion sets, for integer and Gaussian integer continued fractions. 

Throughout we will consider \emph{negative} continued fractions
\[
[b_0,b_1,\dotsc]=b_0- \cfrac{1}{b_1-\cfrac{1}{b_2-\cfrac{1}{\raisebox{-1ex}{$b_3-\dotsb$}}}}\,,
\]
where the coefficients are integers or Gaussian integers. These are more convenient for our purposes than the more familiar positive continued fractions (of the form $b_0+1/(b_1+ 1/(b_2+\dotsb ))$), and, in any case, it is straightforward to convert a positive continued fraction to an equivalent negative continued fraction (and vice versa) by changing the sign of odd-index coefficients.

There is a rich literature on the convergence of continued fractions; for a modern source on this topic, see \cite{LoWa2008}. Much attention has been paid to real or complex continued fractions, and there is comparatively little specifically on integer and Gaussian integer continued fractions. Probably the strongest known exclusion set for \emph{integer} continued fractions comes from the aforementioned work of Katok and Ugarcovici \cite{KaUg2005}*{Lemma~1.1}. They comment that their result ``might exist in the vast literature on continued fractions'', and indeed it does, in a slightly disguised form, in the work of Tietze \cite{Ti1911}. Offering a different perspective, the first and second authors described an algorithm in \cite{ShSt2022} to determine whether or not an integer continued fraction converges; that work cannot be framed in terms of exclusion sets.  

For Gaussian integer continued fractions,  it can be shown (with some effort) that the set 
\begin{equation}\label{equation99}
\hspace*{-1.8cm}U=\{(a,b)\in(\mathbb{Z}[i]\setminus\{0\})^2: ab=1,2,3,\pm i, 1\pm i,2\pm i, 3\pm i,1\pm 2i, 2\pm 2i, 3\pm 2i\}\cup\{0\}
\end{equation}
 is an exclusion set by applying the parabola theorems of Scott and Wall \cite{ScWa1940} and Leighton and Thron \cite{LeTh1942}. We call this a \emph{reversible} exclusion set because $(a,b)\in U$ if and only if $(b,a)\in U$. In \cite{DaNo2014}*{Theorem~6.7}, Dani and Nogueira  described an (ordered) exclusion set, which for negative continued fractions has the form $V=A\cup B\cup \{0,\pm 1, \pm i\} $, where
\begin{align*}
A&=\{(a,b)\in\mathbb{Z}[i]^2: |a|>1,|b|=\sqrt{2},|a-\bar{b}|<2\},\\
B&= \{(a,b)\in\mathbb{Z}[i]^2: |a|=\sqrt{2} ,1<|b|\leq 2,|ab-1|<|b|+1\}.
\end{align*}
This is neither stronger nor weaker than the reversible exclusion set $U$, in the sense that there are continued fractions excluded by $U$ but not by $V$, and vice versa.

We now define exclusion sets over a ring $R$ formally, where $R$ is $\mathbb{Z}$ or $\mathbb{Z}[i]$. Let $X$ be a (possibly infinite) set of finite sequences in $R$. We write elements of $X$ as words $x=(x_1,x_2,\dots,x_n)$ (unless $n=1$ in which case we write the word as $x_1$ rather than $(x_1)$). We say that $x$ is a \emph{substring} of $[b_0,b_1,\dotsc]$ (or of some word $(b_0,b_1,\dots,b_m$)) if $x=(b_r,b_{r+1},\dots,b_{s})$, for integers $r$ and $s$ with $0\leq r\leq s$. The set $X$ is \emph{without redundancy} if no element of $X$ is a substring of another. The \emph{reversal} of $X$ is the set of words obtained by reversing each element of $X$.

Notice that, for any unit $u$, $[b_0,b_1,\dotsc]$ converges if and only if $[ub_0,u^{-1}b_1,ub_2,u^{-1}b_3,\dotsc]$ converges. This motivates the next definition. Consider the operation $X\to X^u$ that replaces each element $(x_1,x_2,\dots,x_n)$ of $X$ with $(ux_1,u^{-1}x_2,\dots,u^{(-1)^{n-1}}x_n)$. We define $\overline{X}$ to be the smallest set of words in $R$ containing $X$ and $0$ that is closed under complex conjugation and closed under all operations $X\to X^u$, for every unit $u$ in $R$. When $R=\mathbb{Z}$, we simply have $\overline{X}=X\cup(-X)\cup\{0\}$. When $R=\mathbb{Z}[i]$, and $X$ is finite, the closure $\overline{X}$ is up to eight times as large as $X$ (plus one). We say that $X$ is \emph{symmetric} if $X=\overline{X}$. For a symmetric set $X$, we denote by $\Ex(X)$ the set of all continued fractions $[b_0,b_1,\dotsc]$ over $R$ for which every tail $[b_n,b_{n+1},\dotsc]$ contains a substring from $X$. 

\begin{definition}
A symmetric set $X$ without redundancy is an \emph{exclusion set} over $R$ if $\Ex(X)$ contains all continued fractions over $R$ that diverge. A \emph{reversible exclusion set} is an exclusion set that is closed under reversal.
\end{definition}

Thus, if only finitely many substrings of an integer continued fraction belong to an exclusion set $X$, then that continued fraction converges. Informally speaking, the better the exclusion set, the fewer \emph{convergent} continued fractions it excludes. We seek optimal exclusion sets.

For example, $X=\{-1,0,1\}$ is an exclusion set (over the integers); $\Ex(X)$ comprises integer continued fractions with infinitely many coefficients $0$, $1$, or $-1$. Also, $Y=\{\pm (1,b):b>0\}\cup\{0\}$ is an  exclusion set, and $\Ex(Y)\subsetneq \Ex(X)$ because $Y$ excludes fewer convergent continued fractions than $X$.
 
We define the \emph{length} of an exclusion set $X$ to be the supremum of the lengths of sequences in $X$. The length could be $+\infty$. There is a partial order $\preceq$ on the collection of \emph{finite} exclusion sets for which $X\preceq Y$ if $\Ex(X) \subseteq \Ex(Y)$. This partial order is antisymmetric because, for distinct sets $X$ and $Y$, we can find an element $s$ of one set such that no element of the other set is a substring of $s$. Then the continued fraction $[1,s,2,s,3,s,\dotsc]$ is excluded by one of $X$ or $Y$ and not the other. A \emph{minimal} exclusion set of length $L$ is a minimal element in the poset of finite exclusion sets of length $L$. (Throughout, minimality is only considered with respect to the poset of finite exclusion sets.) There is a unique minimal  exclusion set over the integers of length 1, namely $X=\{-1,0,1\}$. Our first main result is a complete classification of all minimal exclusion sets over the integers of length two. 

\begin{maintheorem}{A}\label{theoremA}
There are exactly eighteen minimal exclusion sets over the integers of length two, namely $\overline{O_i}$ and their reversals, where
\begin{align*}
O_1 &= \{(1,1), (1,2),(2,1),(1,3)\}\\
O_2 &= \{(1,1), (1,2),(2,2),(1,3),(1,4)\}\\
O_3 &= \{(1,1),(1,2),(2,2),(1,3),(4,1),(4,2)\}\\
O_4 &=\{(1,1), (1,2),(2,2),(3,1),(3,2),(4,1),(4,2),(1,5)\}\\
O_5 &=\{(1,1), (1,2),(2,2),(3,1),(3,2),(4,1),(4,2),(5,2)\}\\
O_6 &=\{(1,1), (1,2),(2,2),(3,1),(3,2),(1,4),(4,2),(1,5)\}\\
O_7 &=\{(1,1), (1,2),(2,2),(3,1),(3,2),(1,4),(4,2),(5,2)\}\\
O_8 &=\{(1,1), (1,2),(2,2),(3,1),(3,2), (1,4),(3,4),(1,5)\}\\
O_9 &=\{(1,1), (1,2),(2,2),(3,1),(3,2), (1,4),(3,4),(5,2)\}.
\end{align*}
\end{maintheorem}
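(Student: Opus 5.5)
Since the statement is a classification, the proof splits into three parts. First, turn the analytic condition ``$X$ is an exclusion set'' into a combinatorial one. Second, show that the eighteen listed sets are exclusion sets. Third, show they are minimal and that every exclusion set of length two dominates one of them.

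For the first part I would use the standard correspondence between negative integer continued fractions and paths in the Farey graph. The convergents $[b_0,\dots,b_n]$ are the vertices of a path, and $b_n$ records how many triangles the path turns across at each vertex. A divergent integer continued fraction is then one whose path fails to converge to a point of $\mathbb{R}\cup\{\infty\}$. In that case the path returns infinitely often to a bounded region, so infinitely often it closes up, modulo $\mathrm{SL}_2(\mathbb{Z})$ and sign changes, into a closed circuit. Closed Farey circuits of this kind correspond to quiddity sequences of Conway--Coxeter triangulated polygons, possibly after the usual sign and degeneracy corrections for coefficients $0$ and $\pm1$.

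The key reduction I would aim for is then the following: a finite symmetric set $X$ of words of length at most two is an exclusion set exactly when every quiddity sequence contains a substring from $X$. Here a quiddity sequence is read cyclically, and a sufficiently long segment of a periodic divergent pattern also counts. In other words, $X$ must be an \emph{unavoidable} set for quiddities, which is the reformulation promised in the abstract. For the converse direction, if some quiddity sequence $q$ avoids $X$, then the periodic continued fraction $[q,q,q,\dotsc]$ walks round the polygon forever and so diverges, yet it is not in $\Ex(X)$.

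For the second part, unavoidability, I would argue by induction on the number of vertices, using the fact that every triangulated polygon has at least two ears, each giving a vertex of quiddity $1$. Its neighbours then have entries $a,b\ge 2$ apart from small exceptions. The pattern around an ear, $(a,1,b)$, forces a substring $(1,b)$ or $(a,1)$. If no word in $O_i$ matches, the neighbouring values are large. In that case I would cut the ear, which decreases $a$ and $b$ by one, and apply induction. The induction needs care because it must track an ear together with an adjacent fan, which is why entries up to $5$ appear in the $O_i$.

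For the third part, minimality and completeness, I would note that a length-two exclusion set must contain $0$ and $(1,1)$, because of the triangle with quiddity $(1,1,1)$. It must also contain $(1,2)$ or $(2,1)$, because of the square with quiddity $(1,2,1,2)$. I would then branch on small polygons, namely the hexagons and the $9$-, $12$- and $N$-gons of the figures. Each polygon forces one of a few candidate words, and the resulting finite decision tree produces exactly $O_1,\dots,O_9$ up to reversal. Minimality follows by exhibiting, for each word $w$ in each $O_i$, a quiddity sequence whose only $O_i$-substrings are occurrences of $w$; dropping $w$ then leaves an avoidable set.

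I expect the main obstacle to be the unavoidability induction for the larger sets $O_4$ to $O_9$, where the ear argument must be strengthened to handle the long fans. I would attempt it by classifying the local structure around a maximal fan and then doing explicit case checks.
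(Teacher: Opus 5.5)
Your necessity argument (periodic continued fractions from the polygons of Figure~\ref{figure2} feeding a decision tree) and your minimality argument are sound and close to the paper's. One small slip: the triangle forces $1$ \emph{or} $(1,1)$, and the square forces $2$, $(1,2)$ or $(2,1)$; the singleton cases give $\overline{O_1}\preceq O$ up to reversal. The gaps are in the sufficiency direction, and the first is that your reduction targets the wrong notion. The usable fact is that a divergent integer continued fraction has infinitely many equal convergents. So every tail contains $r<s$ with $V_r=V_s$ and $V_r,\dots,V_{s-1}$ distinct. Taking this first return makes the loop simple, so no sign or degeneracy corrections arise, and $(b_{r+1},\dots,b_{s-1})$ is $0$ or $\pm$ a \emph{trimmed} quiddity sequence. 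But the coefficients $b_r$ and $b_s$ at the repeated vertex depend on $V_{r-1}$ and $V_{s+1}$ and are unrelated to the polygon's quiddity at $V_r$. So the two cyclic pairs through that vertex need not occur in the continued fraction. Showing that every quiddity cycle, read cyclically, contains a word of $O_i$ therefore does not prove that $\overline{O_i}$ is an exclusion set. You need every \emph{linear} sequence obtained by deleting one vertex to contain such a word. (For length two the two notions happen to coincide, but only as a consequence of the completed classification, so this cannot serve as your reduction.)

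The second and more serious gap is that the unavoidability proof, the heart of the theorem, is not supplied, and the naive ear-cutting step fails. For $O_1$, freeness forces each ear to sit in $(a,1,b)$ with $a\geq 3$ and $b\geq 4$. But if the entry before $a$ is $1$ and $a=4$, cutting the ear creates $(1,3)\in O_1$, so the inductive hypothesis for the smaller polygon tells you nothing about the original. For $O_4,\dots,O_9$ the neighbours of an ear need not even be large, since $(2,1)$, $(1,3)$ and $(1,4)$ are not in $O_4$. For trimmed sequences, the deleted vertex may also lie on or next to the ear you want to use. You say the induction must be strengthened but not how.

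The paper avoids ears entirely. It encodes trimmed sequences by rooted triangulated polygons $A=(a;u;c)$, where $\tau(A)=(a,u)$ is the quiddity sequence with the entry $c$ deleted. It uses the unique decomposition $C=A\ast B$ with $A\ast B=(a+1;u,b+c+1,v;d+1)$, and notes that if $(u,b+c+1,v)$ avoids $X$ then so do $u$ and $v$. A minimal-counterexample argument then finishes the proof. A finite check shows that the small $O_i$-free rooted polygons ($E,F$ for $i=1$; $E,F,G$ for $i=2,3$; $E,F,G,H$ for $i\geq 4$) have no larger $O_i$-free products, so these are the only ones. Their nondegenerate members have $\tau$-values $(1,1)$, $(1,2,1)$, $(2,2,1,3)$, which visibly contain words of $O_i$. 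This handles the deleted vertex automatically and reduces everything to finitely many checks.
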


In full, 
\(
\overline{O_1}=\{0,\pm(1,1), \pm(1,2),\pm(2,1),\pm(1,3)\},
\)
and similarly for the others. 

Our second main result is on infinite exclusion sets; these are exclusion sets of infinite cardinality that still consist of finite words. For an integer continued fraction $[b_0,b_1,\dots,b_n]$ we define $V_0=\infty$ and $V_{k+1}=[b_0,b_1,\dots,b_k]$, for $k=0,1,\dots,n$; these are the convergents of the continued fraction. We define $\widehat{O}$ to be the collection of all words $(b_0,b_1,\dots,b_n)$ for which $V_{n+1}=0$ and $V_0,V_1,\dots,V_{n+1}$ are pairwise distinct. For example, $(1,1)$ belongs to $\widehat{O}$ but $(1,1,1,1,1)$ does not, because although $[1,1,1,1,1]=0$ we also have $[1,1]=0$, so there is a repeated convergent. We will show that $\widehat{O}$ is an exclusion set that excludes fewer convergent continued fractions than \emph{any} finite exclusion set over the integers.

\begin{maintheorem}{B}\label{theoremB}
The set $\widehat{O}$ is an exclusion set over the integers and $\Ex(\widehat{O})\subsetneq \Ex(O)$ for any finite exclusion set $O$ over the integers.
\end{maintheorem}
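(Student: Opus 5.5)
The plan is to translate everything into paths in the Farey graph $\mathscr{F}$, whose vertices are $\mathbb{Q}\cup\{\infty\}$ and whose edges join $a/c$ to $b/d$ when $|ad-bc|=1$. The convergents $V_0=\infty,V_1,V_2,\dotsc$ of an integer continued fraction form a path in $\mathscr{F}$. Let $g_r$ be the product of the matrices attached to $b_0,\dots,b_{r-1}$, so that $g_r(\infty)=V_r$ and $g_r(0)=V_{r-1}$. The convergents of the tail $[b_r,\dots,b_s]$ are then $g_r^{-1}$ of $V_r,\dots,V_{s+1}$. Hence $(b_r,\dots,b_s)\in\widehat{O}$ exactly when the segment $V_{r-1},V_r,\dots,V_{s+1}$ closes up into a simple cycle of $\mathscr{F}$, that is, a triangulated polygon. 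Because $V_{r-1}$ and $V_r$ are adjacent, the endpoint $V_{s+1}=g_r(0)$ coincides with the starting vertex $V_{r-1}$. So a continued fraction lies in $\Ex(\widehat{O})$ if and only if its path has infinitely many such simple cycles. Choosing, for each repeat $V_k=V_j$ with $j<k$, a shortest repeat inside it, this is the same as saying the sequence of convergents is not eventually injective. I will also check that $\widehat{O}$ is symmetric and without redundancy. Redundancy fails to occur because a simple cycle cannot properly contain another simple cycle as a consecutive segment with the same adjacency property.

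\emph{$\widehat{O}$ is an exclusion set.} I must show that if the convergents are eventually pairwise distinct, then the continued fraction converges. After discarding a prefix, the path is injective. An injective infinite path in $\mathscr{F}$ visits each vertex at most once, so it eventually leaves every finite set of vertices. I then use planarity of $\mathscr{F}$ in the hyperbolic disc. Each edge $V_kV_{k+1}$ is a hyperbolic geodesic that separates the disc. Since the path cannot return to earlier vertices, it is eventually confined to nested Farey intervals cut out by edges it crosses or uses. The aim is to show that these intervals shrink to a single point, which is then the limit of $V_k$. I expect this to be the main obstacle. The difficulty is handling paths that spiral around a vertex (long fans with large coefficients) without repeating. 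My first attempt will be to show that any injective path passing around infinitely many edges of the fan at a vertex $v$ has $V_k\to v$, and that otherwise it crosses infinitely many distinct separating edges, giving nested shrinking intervals.

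\emph{$\Ex(\widehat{O})\subseteq\Ex(O)$.} Let $O$ be a finite exclusion set of length $L$, and let $M$ exceed the absolute value of every entry of every word in $O$. First I show that every $w\in\widehat{O}$ contains a substring from $O$. To do this I form $c=[x_1,w,x_2,w,\dotsc]$, where the blocks $x_j$ have all entries of absolute value greater than $M$. The $x_j$ are chosen so that the convergents alternate between neighbourhoods of two distinct points, so $c$ diverges. This is possible because a large coefficient moves the convergent far along the fan at the current vertex. As $O$ is an exclusion set, $c$ has infinitely many substrings from $O$. None of these can involve an entry of any $x_j$, so one lies inside $w$. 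Now if a continued fraction has infinitely many substrings from $\widehat{O}$, infinitely many of these must be disjoint, since each word is finite and the substrings are located at infinitely many positions. Each of them contains a word of $O$, so the continued fraction lies in $\Ex(O)$.

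\emph{Strictness.} Take a triangulated polygon with $N>2L+4$ vertices and quiddity sequence $(q_1,\dots,q_N)$. Every cyclic rotation of this sequence, with the appropriate endpoint convention, is a word of $\widehat{O}$, so by the previous step each rotation contains a word of $O$. Rotating, I may assume that such a word $u$ lies in positions $3,\dots,N-2$. Deleting the last two entries gives a word $s$ whose path traverses the polygon's boundary except for its last edges, so its convergents are pairwise distinct, and $s$ still contains $u$. Finally I build $[y_1,s,y_2,s,\dotsc]$ with separating blocks $y_j$ of very large coefficients. These are chosen so that the whole path is injective and the convergents tend to a single point; for example the path keeps moving into successively smaller Farey intervals, each copy of $s$ sitting inside a fresh interval. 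This continued fraction converges, lies in $\Ex(O)$, and has eventually injective convergents, so it is not in $\Ex(\widehat{O})$.
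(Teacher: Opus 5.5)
Your architecture matches the paper's. Divergence forces a repeated convergent, so $\widehat{O}$ is an exclusion set. Each $w\in\widehat{O}$ contains a word of $O$, shown by interleaving $w$ with coefficients too large to occur in $O$. Strictness comes from $[y_1,s,y_2,s,\dotsc]$ with no repeated convergents. Your translation (membership in $\Ex(\widehat{O})$ iff the convergents are not eventually injective) is also correct.

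The genuine gap is the first part. You reduce it to the claim that eventually injective convergents converge, and then leave exactly that claim open. The nested-interval and fan dichotomy is a plan, not an argument. The paper does not prove this step either; it cites \cite{ShSt2022}*{Theorem~1.3} in Lemma~\ref{lemma83}. So you must either cite it or prove it.

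A short proof works with accumulation points rather than nested intervals. Suppose an injective path of convergents had accumulation points $x\neq y$ in $\mathbb{R}\cup\{\infty\}$.
\begin{itemize}
\item If $x$ is irrational, take a Farey edge $\{p,q\}$ with a complementary open arc $I$ such that $x\in I$ and $y\notin\overline{I}$.
\item If $x$ is rational, let $I$ be the open arc around $x$ cut out by two fan neighbours $x_{-m},x_m$ of $x$, again with $y\notin\overline{I}$.
\end{itemize}
Farey edges never cross. Hence every step of the path from a vertex in $I$ to a vertex outside $I$ has an endpoint in the finite set $\{p,q\}$ (respectively $\{x_{-m},x,x_m\}$). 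The path must leave $I$ infinitely often, but an injective path meets a finite set only finitely often, which is a contradiction. The ``spiralling around a fan'' case needs no separate treatment.

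The other two steps are correct in outline but only heuristically justified.

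For the divergence of $[x_1,w,x_2,w,\dotsc]$, the missing ingredient is the identity behind Lemma~\ref{lemma7}. Since $[w]=0$ with distinct convergents, the matrix product of $w$ is $Q=\pm\begin{pmatrix}0&-1\\1&-q\end{pmatrix}$ for an integer $q$, and $M_kQM_{k'}=\mp M_{k+k'-q}$. This makes your fan picture precise: each copy of $w$ returns the path to the same pivot vertex, and the next coefficient moves along that vertex's fan by $x_{j+1}-q$. Taking the single coefficients $q+n,q-n,q+n+1,q-n-1,\dotsc$ with $n$ large then makes the convergents equal $\infty$ and $q$ infinitely often.

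For strictness, choose the $y_j$ recursively, as the paper does. Let $H$ be the M\"obius map determined by the earlier blocks. The new convergents are $H(y_j)$ and $H(y_j-1/[s_0,\dots,s_k])$. These are pairwise distinct and differ from $H(\infty)$ because $s$ is a proper substring of an element of $\widehat{O}$. This condition cannot be arranged by enlarging $y_j$, so it must be checked. Each of the remaining finitely many earlier convergents rules out only finitely many $y_j$.

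Two simplifications are available. Your rotation of a large polygon and the bound $N>2L+4$ are unnecessary: any $t\in\widehat{O}$ longer than every word of $O$ contains some $u\in O$ as a proper substring, and $u$ itself can serve as $s$. Nor do you need this continued fraction to converge; injectivity of its convergents suffices.
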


To prove Theorems~\ref{theoremA} and~\ref{theoremB}, we use the correspondence between integer continued fractions and paths in the Farey graph described in, for example, \cites{BeHoSh2012,Ha2022}. The Farey graph is the infinite graph in the (closed) upper half-plane with vertices at the rationals (and infinity) and with an edge between two reduced rationals $a/b$ and $c/d$ if $ad-bc=\pm1$. The edges are represented by hyperbolic lines. A finite or infinite integer continued fraction can be represented in the Farey graph by a path with initial vertex $\infty$; the vertices of the path are the convergents of the continued fraction. For example, in Figure~\ref{figure1} the continued fraction $[1,-2,-2,1]$ is represented by the path $\langle \infty, 1,\tfrac32, \tfrac53,\tfrac85\rangle$ from $\infty$ to the value $\tfrac85$ of the continued fraction, and the convergents are 
\[
\infty,\qquad [1]=1,\qquad [1,-2]=\tfrac32,\qquad [1,-2,-2]=\tfrac53,\qquad [1,-2,-2,1]=\tfrac85.
\]
Positive coefficients $b_i$  correspond to right turns in the path and negative coefficients correspond to left turns (for $i>0$).

\begin{figure}[ht]
\centering
\begin{tikzpicture}[scale=2.0]
   \clip (-2.9,-0.3) rectangle (2.9,1.5);
   	
   \draw (-3,0) -- (3,0);	

   \foreach \leftend/\rightend in {0/1,0.0/0.5,0.5/1.0,0.0/0.333,0.333/0.5,0.0/0.25,0.25/0.333,0.0/0.2,0.2/0.25,0.0/0.167,0.167/0.2,0.0/0.143,0.143/0.167,0.0/0.125,0.125/0.143,0.0/0.111,0.111/0.125,0.0/0.1,0.1/0.111,0.0/0.091,0.091/0.1,0.0/0.083,0.083/0.091,0.0/0.077,0.077/0.083,0.0/0.071,0.071/0.077,0.0/0.067,0.067/0.071,0.0/0.062,0.062/0.067,0.0/0.059,0.059/0.062,0.0/0.056,0.056/0.059,0.0/0.053,0.053/0.056,0.0/0.05,0.05/0.053,0.0/0.048,0.048/0.05,0.05/0.051,0.051/0.053,0.053/0.054,0.054/0.056,0.056/0.057,0.057/0.059,0.059/0.061,0.061/0.062,0.062/0.065,0.065/0.067,0.067/0.069,0.069/0.071,0.071/0.074,0.074/0.077,0.077/0.08,0.08/0.083,0.083/0.087,0.087/0.091,0.091/0.095,0.095/0.1,0.1/0.105,0.105/0.111,0.111/0.118,0.118/0.125,0.125/0.133,0.133/0.143,0.143/0.154,0.154/0.167,0.167/0.182,0.182/0.2,0.2/0.222,0.222/0.25,0.25/0.286,0.286/0.333,0.25/0.273,0.273/0.286,0.286/0.3,0.3/0.333,0.333/0.4,0.4/0.5,0.333/0.375,0.375/0.4,0.333/0.364,0.364/0.375,0.375/0.385,0.385/0.4,0.4/0.429,0.429/0.5,0.4/0.417,0.417/0.429,0.429/0.444,0.444/0.5,0.429/0.438,0.438/0.444,0.444/0.455,0.455/0.5,0.444/0.45,0.45/0.455,0.455/0.462,0.462/0.5,0.5/0.667,0.667/1.0,0.5/0.6,0.6/0.667,0.5/0.571,0.571/0.6,0.5/0.556,0.556/0.571,0.5/0.545,0.545/0.556,0.5/0.538,0.538/0.545,0.545/0.55,0.55/0.556,0.556/0.562,0.562/0.571,0.571/0.583,0.583/0.6,0.6/0.625,0.625/0.667,0.6/0.615,0.615/0.625,0.625/0.636,0.636/0.667,0.667/0.75,0.75/1.0,0.667/0.714,0.714/0.75,0.667/0.7,0.7/0.714,0.714/0.727,0.727/0.75,0.75/0.8,0.8/1.0,0.75/0.778,0.778/0.8,0.75/0.769,0.769/0.778,0.778/0.786,0.786/0.8,0.8/0.833,0.833/1.0,0.8/0.818,0.818/0.833,0.833/0.857,0.857/1.0,0.833/0.846,0.846/0.857,0.857/0.875,0.875/1.0,0.857/0.867,0.867/0.875,0.875/0.889,0.889/1.0,0.875/0.882,0.882/0.889,0.889/0.9,0.9/1.0,0.889/0.895,0.895/0.9,0.9/0.909,0.909/1.0,0.9/0.905,0.905/0.909,0.909/0.917,0.917/1.0,0.909/0.913,0.913/0.917,0.917/0.923,0.923/1.0,0.917/0.92,0.92/0.923,0.923/0.929,0.929/1.0,0.923/0.926,0.926/0.929,0.929/0.933,0.933/1.0,0.929/0.931,0.931/0.933,0.933/0.938,0.938/1.0,0.933/0.935,0.935/0.938,0.938/0.941,0.941/1.0,0.938/0.939,0.939/0.941,0.941/0.944,0.944/1.0,0.941/0.943,0.943/0.944,0.944/0.947,0.947/1.0,0.944/0.946,0.946/0.947,0.947/0.95,0.95/1.0,0.947/0.949,0.949/0.95,0.95/0.952,0.952/1.0,0.95/0.951,0.951/0.952,0.952/0.955,0.955/1.0}{
    	\foreach \n in {-3,-2,-1,0,1,2} {
	    	\draw[] ({\n+\rightend},0) arc (0:180:{0.5*(\rightend-\leftend)}); 
	    	\draw[] ({-\n-\leftend},0) arc (0:180:{0.5*(\rightend-\leftend)});  
	 } 	
   }
   
    \foreach \bottom in {-3,-2,-1,0,1,2,3} {
    	\draw (\bottom,0) -- (\bottom,1.5);
    }
   
\foreach \pos/\label in {-2/$-2\phantom{-}$,-1/$-1\phantom{-}$,0/$0$,1/$1$,2/$2$,
-2.5/$-\!\tfrac52\phantom{-}$,-1.5/$-\!\tfrac32\phantom{-}$,-0.5/$-\!\tfrac12\phantom{-}$,0.5/$\tfrac12$,1.5/$\tfrac32$,2.5/$\tfrac52$,-2.667/$-\!\tfrac83\phantom{-}$,-2.333/$-\!\tfrac73\phantom{-}$,-1.667/$-\!\tfrac53\phantom{-}$,-1.333/$-\!\tfrac43\phantom{-}$,-0.667/$-\!\tfrac23\phantom{-}$,-0.333/$-\!\tfrac13\phantom{-}$,0.333/$\tfrac13$,0.667/$\tfrac23$,1.333/$\tfrac43$,1.667/$\tfrac53$,2.333/$\tfrac73$,2.667/$\tfrac83$}{
    	\node [below] at (\pos,0) {{\footnotesize \label}}; 	
   }

\begin{scope}[draw=alphacol,very thick]
\draw[midarrow=5pt] (1,1.5) -- (1,0);
\draw[] (1,0) arc (180:0:0.25);\draw[midarrow=4pt,draw=none](1.2,0.25) -- (1.4,0.25);
\draw[] (1.5,0) arc (180:0:0.083);\draw[midarrow=3.5pt,draw=none](1.57,0.083) -- (1.67,0.083);
\draw[] (1.666,0) arc (0:180:0.033);

\end{scope}
	
\end{tikzpicture}
\caption{Path in the Farey graph}
\label{figure1}
\end{figure}

Each element $(b_0,b_1,\dots,b_n)$ of $\widehat{O}$ determines a continued fraction whose convergents form a path from $\infty$ to $0$ in the Farey graph, without intersections. By adjoining the edge from $0$ to $\infty$ we obtain a simple closed path, a loop, traversed clockwise if the $b_i$ are positive and anticlockwise if they are negative, which encloses an ideal triangulated polygon. We will make use of the triangulated polygons of Figure~\ref{figure2}, depicted as Euclidean rather than hyperbolic polygons.

\begin{figure}[H]
\centering
\PolySub{\PolyA}{$(1,1,1)$}\hfill
\PolySub{\PolyB}{$(1,2,1,2)$}\hfill
\PolySub{\PolyC}{$(1,3,1,3,1,3)$}

\vspace{1em}
\PolySub{\PolyD}{$(2,1,3)^2$}\hfill
\PolySub{\PolyE}{$(2,1,4)^3$}\hfill
\PolySub{\PolyF}{$(2,1,3,4)^3$}

\vspace{1em}
\PolySub[.33\linewidth]{\PolyG}{$(2,\dots,2,1,N)^2$}\hfill
\PolySub[.33\linewidth]{\PolyH}{$(2,1,4,1,\dotsc,4,1,N)^2$}\hfill
\PolySub[.33\linewidth]{\PolyI}{$(2,1,5,\dots,2,1,5,2,1,3,N)^2$}

\caption{Triangulated polygons}
\label{figure2}
\end{figure}

The vertex labels in Figure~\ref{figure2} represent the number of triangles incident to each vertex. The label at vertex $j$ is equal to the modulus of the coefficient $b_j$ of the corresponding continued fraction. The captions describe the resulting sequences of labels, read clockwise. Here $x^m$ denotes $m$-fold concatenation of the word $x$. Sequences of vertex labels of this type were called \emph{quiddity sequences} by Conway and Coxeter in \cite{CoCo1973} in their study of frieze patterns. The set $\widehat{O}^+$ of positive elements of $\widehat{O}$ consists of \emph{trimmed quiddity sequences}: each element of $\widehat{O}^+$ is obtained by removing the last coefficient of a quiddity sequence. For example, $(1,2,1,2)$ is a quiddity sequence and $(1,2,1)$ is a trimmed quiddity sequence. It is easily shown that $\widehat{O}=\widehat{O}^+\cup(-\widehat{O}^+)\cup\{0\}$.	

We prove Theorem~\ref{theoremA} by identifying substrings of trimmed quiddity sequences. This process is removed from the theory of continued fractions, giving us the following independent corollary. Here we say that a finite set $X$ of positive integer words without redundancy is an \emph{unavoidable set} if each trimmed quiddity sequence has a substring that belongs to $X$. The notion of being unavoidable is similar to that of exclusion; we use the former term only in the context of this corollary. There is a partial order $\preceq$ on finite unavoidable sets for which $X\preceq Y$ if every $x\in X$ contains some $y\in Y$ as a substring. A \emph{minimal} unavoidable set of length $L$ is a minimal element in the poset of finite unavoidable sets of length $L$.

\begin{maincorollary}{C}\label{corollaryC}
The minimal unavoidable sets of length two are precisely $O_1,O_2,\dots,O_9$ and their reversals.
\end{maincorollary}

In \cite{Cu2018}, Cuntz describes an algorithm for obtaining (not minimal) unavoidable sets of quiddity cycles (rather than trimmed quiddity sequences) of any prescribed length. Corollary~\ref{corollaryC} is of a similar spirit; it completely classifies all minimal unavoidable sets, although only of length two. Cuntz's algorithm, or that of Section~\ref{section2}, could be used to compute minimal unavoidable sets (and exclusion sets) of any given length, subject to the limits of computation.

Let us now turn to exclusion sets for Gaussian integer continued fractions. Here we consider only \emph{reversible} exclusion sets of length two. To save notation, we often write reversible sets of singletons and pairs as collections of multisets of cardinality 1 or 2. For example, we write $X=\{ \{0\}, \{1,i\}, \{1,1\} \}$ rather than $X=\{ 0, (1,i),(i,1), (1,1)\}$. A \emph{minimal} reversible exclusion set of length two is a minimal element in the poset of all finite reversible exclusion sets of length two. Remarkably, we can classify all such minimal sets.

\begin{maintheorem}{D}\label{theoremD}
There are exactly two minimal  reversible  exclusion sets over the Gaussian integers of length two, namely $\overline{U_i}$ where
\[
\begin{aligned}
U_1
&=
\left\{
\begin{aligned}
&\{1,1\}, \{1,2\}, \{1,3\}, \{1,i\}, \{1,1+i\},
 \{1,2+i\}, \{1,3+i\},\\
&\{1,1+2i\}, \{1,2+2i\}, \{1+i,1-i\},
 \{1+i,2\}, \{1+i,2-i\}
\end{aligned}
\right\}\\[1ex]
U_2
&=
\left\{
\begin{aligned}
&\{1,1\}, \{1,2\}, \{1,3\}, \{1,i\}, \{1,1+i\},
 \{1,2+i\}, \{1,3+i\},\\
&\{1+i,2+i\},\{1,2+2i\}, \{1+i,1-i\},
 \{1+i,2\}, \{1+i,2-i\}
\end{aligned}
\right\}.
\end{aligned}
\]
\end{maintheorem}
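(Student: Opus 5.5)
Theorem~\ref{theoremD} splits into two tasks: show that $\overline{U_1}$ and $\overline{U_2}$ are exclusion sets, and show that every finite reversible exclusion set of length two contains (as substrings, in the sense of the partial order) one of them, which also gives that the two are incomparable and minimal. I would first translate to geometry, as in the integer case. A Gaussian integer continued fraction corresponds to a path in the Farey-type graph for $\mathrm{PSL}_2(\mathbb{Z}[i])$, with vertices at $\mathbb{Q}(i)\cup\{\infty\}$; alternatively, it corresponds to a sequence of Möbius maps $z\mapsto b_k-1/z$. I would use the known characterisation that a Gaussian integer continued fraction diverges if and only if its path is, in the appropriate sense, eventually recurrent or non-escaping. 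The analogue of $\widehat{O}$ is the set of words $(b_0,\dots,b_n)$ with $[b_0,\dots,b_n]=0$ and pairwise distinct convergents, that is, the ``Gaussian quiddity words'' giving simple closed paths $\infty\to 0\to\infty$. The integer argument behind Theorem~\ref{theoremB} shows that such a set is an exclusion set; the same argument should carry over, since a divergent sequence must repeatedly close near-loops. So it suffices to prove that every such closed word contains a length-one or length-two substring from $\overline{U_i}$. This is the unavoidability statement corresponding to Corollary~\ref{corollaryC}.

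For the \textbf{sufficiency} step, I would handle the words avoiding $\overline{U_i}$ via the parabola theorem, as the introduction indicates was done for $U$. I would show that if every adjacent pair $(b_k,b_{k+1})$ lies outside $\overline{U_i}$, then after a suitable unit twist the elements $-1/(b_kb_{k+1})$ lie in a fixed parabolic region. The parabola theorem then gives convergence, or, in the finite setting, rules out $[b_0,\dots,b_n]=0$ with a simple loop. Concretely, I would enumerate the products $b_kb_{k+1}$ with small modulus that are not listed. Pairs with $|b_k|,|b_{k+1}|\ge\sqrt5$, or with $|b_k|\ge 3$ when the other entry is a unit, are handled by a crude $|b|\ge 2$-type estimate. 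That leaves a finite list of pairs involving $1$ (up to units), $1+i$, $2$ and $2+i$, and the choices in $U_1$ versus $U_2$ (namely $\{1,1+2i\}$ versus $\{1+i,2+i\}$) reflect which of these two pairs can be left unexcluded. I expect this to need a sharper, refined argument than the plain parabola theorem. For each remaining configuration, I would run a local contraction or value-region argument: find a region $D\subset\hat{\mathbb{C}}$ such that each map $z\mapsto b-1/z$ allowed after the previous coefficient sends $D$ strictly inside itself, and this forbids returning to $\infty$.

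For \textbf{necessity and minimality}, I would exhibit explicit closed Gaussian quiddity words in the style of Figure~\ref{figure2}. Each word should contain essentially one forbidden pair up to symmetry, or only pairs from a prescribed small list. Such a word forces any reversible exclusion set of length two to contain one of the listed pairs or a singleton dividing it. Combining these forcing words gives a finite case analysis. Every finite reversible exclusion set must contain $0$, the singletons must be a subset of the units (otherwise it is not minimal), and the words then force all of $\{1,1\},\{1,2\},\dots$. The only genuine branching is between $\{1,1+2i\}$ and $\{1+i,2+i\}$, produced by a family of words (parametrised by $N$, like Figure~\ref{figure2}(g)--(i)) containing both pairs but no others outside the common core.

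The main obstacle is the sufficiency step for the borderline pairs. The parabola theorems alone only give the weaker set $U$ from \eqref{equation99}, so I would need to replace them with a case-by-case search. I would try first a computer-assisted search over bounded-length closed paths in the Gaussian Farey graph, together with a local-to-global lemma showing that a minimal counterexample loop has bounded length. This lemma would mirror the algorithm of Section~\ref{section2} used for Theorem~\ref{theoremA}.
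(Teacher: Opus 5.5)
\textbf{Gap 1: the reduction to closed loops fails.} Lemma~\ref{lemma83} rests on the fact that a divergent \emph{integer} continued fraction has infinitely many equal convergents. That fact comes from planarity of the Farey graph: the two endpoints of any edge separate it. The Gaussian Farey graph, the 1-skeleton of an octahedral tessellation of hyperbolic 3-space, has no such separation, and deleting finitely many vertices does not disconnect it. So you can build inductively a path from $\infty$ with pairwise distinct vertices that alternately enters $D(0,1/k)$ and $\{|z|>k\}$. The corresponding Gaussian integer continued fraction diverges, yet it has no substring of value $0$ with distinct convergents, since such a substring is exactly a return $V_r=V_s$. Hence your Gaussian analogue of $\widehat{O}$ is not an exclusion set. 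Showing that every simple loop contains a pair from $\overline{U_i}$, by hand or by a bounded computer search, therefore does not prove that $\overline{U_i}$ is an exclusion set. You must prove convergence of every infinite continued fraction avoiding $\overline{U_i}$ directly, as the paper does in Proposition~\ref{prop52}.

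\textbf{Gap 2: the direct value-region sketch misplaces the difficulty.} Put $e_n=b_{n-1}b_n$, $G_e(z)=1-1/(ez)$ and $F_b(z)=b-1/z$. Then $G_{e_1}\dotsb G_{e_n}(z)=b_0^{-1}F_{b_0}\dotsb F_{b_{n-1}}(b_nz)$, and $G_e$ maps $H=\{\Re z>1/2\}$ into itself exactly when $|e-1|^2\geq 1+2|e|$. The products of pairs avoiding $\overline{U_1}$ that violate this are $3\pm 2i$; for $\overline{U_2}$ there are also $1\pm 2i$. So your claim that a unit paired with an entry of modulus at least $3$ falls to a crude estimate is false. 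The pair $\{1,3+2i\}$ avoids both sets, and it is precisely where $\overline{U_1}$ improves on the parabola-theorem set $U$. The missing idea is that these products are Gaussian primes, so one factor is a unit. This lets you merge $e_n$ with a neighbour into blocks $(p,pd)$, $(pd,p)$ or $(p,upr,r)$ whose compositions do preserve $H$ (Lemmas~\ref{lemma52} and~\ref{lemma53}).

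Even then, mapping a region into itself does not by itself give convergence. You also need a uniform condition such as $T_k(-5)\in K$ for a fixed compact $K\subset H$, a Hillam--Thron-type theorem, and control of the incomplete final block.

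\textbf{Necessity.} Your outline matches the paper's, with one correction. The word forcing the dichotomy must have \emph{all} its adjacent pairs equivalent to $\{1,1+2i\}$ or $\{1+i,2+i\}$; the paper uses $[\overline{S,-S}]$. A word that also contains pairs from the common core forces nothing, because any exclusion set already contains something from the core. Finally, incomparability of $\overline{U_1}$ and $\overline{U_2}$ does not follow from the rest and needs its own check, for example via $[\overline{1,1+2i}]$ and $[\overline{1+i,2+i}]$.
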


The full set $\overline{U_1}$ is far larger than $U_1$; for instance, in place of the pair $\{1+i,2-i\}$ in $U_1$ we have the collection of pairs
\[
\pm\{1+i,2-i\}, \quad \pm\{1-i,2+i\}, \quad \pm\{1+i,1-2i\}, \quad  \pm\{1-i,1+2i\} 
\]
in $\overline{U_1}$ . Notice that all these pairs have product $3\pm i$, because the operation $\{a,b\}\mapsto \{ua, u^{-1}b\}$ leaves the product unchanged.

The sets $U_1$ and $U_2$ differ only in the pairs $\{1,1+2i\}$ and $\{1+i,2+i\}$. The significance of this is explained, in part, by the periodic continued fraction
\[
[\overline{S,-S}],\quad\text{where } S=(1+2i,1,1-2i,-1+i,-2+i,-i,2+i,1+i)
\]
(and $[\overline{x}]$ denotes the periodic continued fraction with period $x$). Every adjacent pair $\{b_{i-1},b_i\}$ from this continued fraction is equal to one of $\{1,1+2i\}$ or $\{1+i,2+i\}$, up to complex conjugation and the $\{ua,u^{-1}b\}$ operation. We illustrate this and other (later) Gaussian integer continued fractions by paths in the Gaussian Farey graph, in the same way that we display integer continued fractions in the usual Farey graph. The Gaussian Farey graph is the infinite graph in upper half three-space with vertices at Gaussian rationals (and infinity) and with an edge between two reduced Gaussian rationals $a/b$ and $c/d$ if $|ad-bc|=1$ (see, for example, \cite{Ho2020} for background). The periodic continued fraction $[\overline{S,-S}]$ is illustrated by the closed path that passes through the 16 convergents of the continued fraction in Figure~\ref{figure74}. These are Gaussian rationals, marked by solid spots, with labels outside the path. The smaller labels inside are the continued fraction coefficients, all Gaussian integers.

\begin{figure}[ht]
\centering
\begin{tikzpicture}[scale=1.5]
  \GridThreeThree
  \Edge{(1,1)}{(1,0)} \Edge{(1,0)}{(1.5,-0.5)} \Edge{(1.5,-0.5)}{(1,-1)} \Edge{(1,-1)}{(1,-2)} \Edge{(1,-2)}{(0,-2)} \Edge{(0,-2)}{(-0.5,-2.5)} \Edge{(-0.5,-2.5)}{(-1,-2)} \Edge{(-1,-2)}{(-2,-2)} 
  \Edge{(-2,-2)}{(-2,-1)} \Edge{(-2,-1)}{(-2.5,-0.5)} \Edge{(-2.5,-0.5)}{(-2,0)} \Edge{(-2,0)}{(-2,1)} \Edge{(-2,1)}{(-1,1)} \Edge{(-1,1)}{(-0.5,1.5)}\Edge{(-0.5,1.5)}{(0,1)} \Edge{(0,1)}{(1,1)}
	  \foreach \p in {(1,1),(1,0),(1.5,-0.5),(1,-1),(1,-2),(0,-2),(-0.5,-2.5),(-1,-2),(-2,-2),(-2,-1),(-2.5,-0.5),(-2,0),(-2,1),(-1,1),(-0.5,1.5),(0,1)} {\Spot{\p}}
  \node[rational,above right=2pt] at (1,1) {$1+i$};
  \node[rational,right=2pt] at (1,0) {$1$}; 
  \node[rational,right=2pt] at (1.5,-0.5) {$\tfrac32-\tfrac12i$};
  \node[rational,right=2pt,yshift=-1pt] at (1,-1) {$1-i$};   
  \node[rational,below right=2pt] at (1,-2) {$1-2i$};
  \node[rational,below=2pt,xshift=2pt] at (0,-2) {$-2i$}; 
  \node[rational,below=2pt] at (-0.5,-2.5) {$-\tfrac12-\tfrac52i$};
  \node[rational,below left=2pt] at (-1,-2) {$-1-2i$};  
  \node[rational,below left=2pt] at (-2,-2) {$-2-2i$};
  \node[rational,left=2pt,yshift=-1pt] at (-2,-1) {$-2-i$}; 
  \node[rational,left=2pt] at (-2.5,-0.5) {$-\tfrac52-\tfrac12i$};
  \node[rational,left=2pt] at (-2,0) {$-2$};   
  \node[rational,above left=2pt] at (-2,1) {$-2+i$};
  \node[rational,above left=2pt] at (-1,1) {$-1+i$}; 
  \node[rational,above=2pt] at (-0.5,1.5) {$-\tfrac12+\tfrac32i$};
  \node[rational,above=2pt] at (0,1) {$i$};  

  \node[gint,below left=2pt] at (1,1) {$1+i$};
  \node[gint,above left=2pt] at (1,0) {$-1-2i$}; 
  \node[gint,left=2pt] at (1.5,-0.5) {$-1$};
  \node[gint,above left=2pt] at (1,-1) {$-1+2i$};   
  \node[gint,above left=2pt] at (1,-2) {$1-i$};
  \node[gint,above right=2pt] at (0,-2) {$2-i$}; 
  \node[gint,above=2pt] at (-0.5,-2.5) {$i$};
  \node[gint,above right=2pt] at (-1,-2) {$-2-i$};  
  \node[gint,above right=2pt] at (-2,-2) {$-1-i$};
  \node[gint,above right=2pt] at (-2,-1) {$1+2i$}; 
  \node[gint,right=2pt] at (-2.5,-0.5) {$1$};
  \node[gint,above right=2pt] at (-2,0) {$1-2i$};   
  \node[gint,below right=2pt] at (-2,1) {$-1+i$};
  \node[gint,below right=2pt] at (-1,1) {$-2+i$}; 
  \node[gint,below=5pt] at (-0.5,1.5) {$-i$};
  \node[gint,below right=2pt] at (0,1) {$2+i$};  
 \end{tikzpicture}
 \caption{Path of $[\overline{S,-S}]$ in the Gaussian Farey graph}
 \label{figure74}
 \end{figure}

The collection $\overline{U_1}$ can be described succinctly by taking products of pairs. Specifically, let
\[
\Omega=\{1,2,3,\pm i, 1\pm i, 2\pm i, 3\pm i, 1\pm 2i, 2\pm 2i\};
\]
then
\begin{equation}\label{equation98}
\overline{U_1} = \{(a,b)\in(\mathbb{Z}[i]\setminus\{0\})^2:ab\in\Omega\}\cup\{0\}.
\end{equation}
The Gaussian integers $2$, $3\pm i$, and $2\pm 2i$ are composite, and 
\[
\hspace{-1.6cm}2=1\cdot 2=(1+i)\cdot (1-i),\quad 3+i=1\cdot (3+i)=(1+i)(2-i),\quad 2+2i=1 \cdot (2+2i)=(1+i)\cdot 2;
\] 
all other elements of  $\Omega$ are primes or units. The collection $\overline{U_2}$ cannot quite be determined by its products in the same way, because $\{1+i,2+i\}\in \overline{U_2}$, $\{1,1+3i\}\not\in \overline{U_2}$, and $1+3i=1\cdot (1+3i)=(1+i)\cdot (2+i)$.

The set $\Omega$ comprises those Gaussian integers that lie inside the cardioid $|z-1|^2<1+2|z|$, with the exception of $3\pm 2i$, which are excluded. This set is illustrated in Figure~\ref{figure5}. The solid spots mark elements of $\Omega$ and the products $1\pm 3i$ that arise from $\overline{U_2}$. 

\begin{figure}[H]
\centering
\begin{tikzpicture}[scale=0.9,>=Latex,every node/.style={font=\small}]

\def\xmin{-2.3}
\def\xmax{4.8}
\def\ymin{-3.5}
\def\ymax{3.6}

\draw[step=1, gray!35, dotted] (\xmin,\ymin) grid (\xmax,\ymax);

\fill[cyan!18, opacity=0.85]
  plot[domain=-180:180, samples=350, smooth, variable=\t]
    ({(2+2*cos(\t))*cos(\t)}, {(2+2*cos(\t))*sin(\t)})
  -- cycle;

\foreach \x in {-2,-1,0,1,2,3,4}{
  \foreach \y in {-3,-2,-1,0,1,2,3}{
    \draw[gray!55, fill=white] (\x,\y) circle (0.045);
  }
}

\draw[->, thick] (\xmin,0) -- (\xmax,0) ;
\draw[->, thick] (0,\ymin) -- (0,\ymax);

\foreach \x in {-2,-1,0,1,2,3}
  \node[below] at (\x,0) {$\x$};
\node[below left] at (4,0) {$4$};

\node[left] at (0,1) {$i$};
\node[left] at (0,-1) {$-i$};
\foreach \y in {-3,-2,2,3}
  \node[left] at (0,\y) {$\y i$};

\draw[black,thick]
  plot[domain=-180:180, samples=350, smooth, variable=\t]
    ({(2+2*cos(\t))*cos(\t)}, {(2+2*cos(\t))*sin(\t)});
    
\foreach \z in {(0,-1),(0,1),(1,-2),(1,-1),(1,0),(1,1),(1,2),(2,-2),(2,-1),(2,0),(2,1),(2,2),(3,-1),(3,0),(3,1),(1,3),(1,-3)}{
  \fill[black] \z circle (0.065);
}

\end{tikzpicture}
\caption{Cardioid $|z-1|^2<1+2|z|$ and products of pairs from $\overline{U_1}$ and $\overline{U_2}$}
\label{figure5}
\end{figure}

We can see from \eqref{equation98} that $\overline{U_1}$ can be obtained from the exclusion set $U$ of \eqref{equation99}  by removing all pairs with product $3\pm 2i$. Since the continued fraction $[\overline{1,3+2i}]$ is excluded by $U$, but not by $\overline{U_1}$, it follows that $\overline{U_1}$ strengthens the classical criterion obtained from the parabola theorem.

\begin{maincorollary}{E}
We have $\Ex(\overline{U_1})\subsetneq \Ex(U)$.
\end{maincorollary}

This corollary fails with $\overline{U_2}$ in place of $\overline{U_1}$ because, for example, $[\overline{1+i,2+i}]$ is excluded by $\overline{U_2}$ but not by $U$.

\section{Exclusion sets of length two over the integers}\label{section2}

In this section we prove Theorem~\ref{theoremA}, the first part of Theorem~\ref{theoremB}, and Corollary~\ref{corollaryC}. We begin this task by reviewing  some standard concepts from continued fraction theory, introducing notation that we will use throughout. Let $\widehat{\mathbb{C}}$ denote the Riemann sphere $\mathbb{C}\cup\{\infty\}$. Let $b_0,b_1,\dotsc$ be integers or Gaussian integers and let $A_0,A_1,\dotsc$ and $B_0,B_1,\dotsc$ be given by
\begin{equation*}\label{B}
\begin{pmatrix}A_{n+1} & -A_{n}\\B_{n+1} & -B_{n}\end{pmatrix}
=M_{b_0}M_{b_1}\dotsb M_{b_n},\quad\text{where } M_{b}=\begin{pmatrix}b & -1\\1 & 0\end{pmatrix}.
\end{equation*}
Then $A_0=1$, $B_0=0$, $A_1=b_0$, $B_1=1$, and $A_{n+1}=b_nA_n-A_{n-1}$ and $B_{n+1}=b_nB_n-B_{n-1}$, for $n\geq 1$. The quotients $A_n/B_n\in\widehat{\mathbb{C}}$ are the convergents of the continued fraction $[b_0,b_1,\dotsc]$. We consider convergence with respect to the topological space $\widehat{\mathbb{C}}$; thus convergence to $\infty$ is permitted.

\begin{lemma}\label{lemma83}
The set $\widehat{O}$ is an exclusion set.
\end{lemma}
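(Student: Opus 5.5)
The plan is to check that $\widehat{O}$ is symmetric and without redundancy, and then prove the substantive claim: every integer continued fraction that has a tail with no substring in $\widehat{O}$ converges. The main tool is the identification of convergents with Farey-graph paths. If $g=M_{b_0}\dotsb M_{b_{r-1}}$, then $g$ carries the convergents of $[b_r,b_{r+1},\dotsc]$ onto $V_r,V_{r+1},\dotsc$, since $g(\infty)=A_r/B_r=V_r$. Moreover $g(0)=A_{r-1}/B_{r-1}=V_{r-1}$.

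\emph{Symmetry.} Over $\mathbb{Z}$ we must check that $0\in\widehat{O}$ and that $\widehat{O}$ is closed under negation. The word $0$ has convergents $\infty,0$, which are distinct, so $0\in\widehat{O}$. Negating every coefficient negates every convergent; this fixes $\infty$ and $0$ and preserves distinctness.

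\emph{No redundancy.} Let $w=(b_0,\dots,b_n)\in\widehat{O}$, and suppose a proper substring $(b_r,\dots,b_s)$ also lies in $\widehat{O}$. By the remark above, the convergents of the substring are $g^{-1}(V_r),\dots,g^{-1}(V_{s+1})$, and the last of these equals $0=g^{-1}(V_{r-1})$ when $r\geq1$. Hence $V_{s+1}=V_{r-1}$ with $r-1<s+1$, contradicting the distinctness of the convergents of $w$. If $r=0$, then $g$ is the identity, so $V_{s+1}=0=V_{n+1}$. Distinctness then forces $s=n$, so the substring is not proper.

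\emph{Exclusion.} It suffices to show the following: if some tail $[b_N,b_{N+1},\dotsc]$ contains no substring from $\widehat{O}$, then the continued fraction converges. Since the tail's convergents are the images of $V_N,V_{N+1},\dots$ under the M\"obius map $h^{-1}$, where $h=M_{b_0}\dotsb M_{b_{N-1}}$, convergence of the tail is equivalent to convergence of the whole. We may therefore assume $N=0$. The key combinatorial step is this: if the convergent sequence has a repetition $V_i=V_j$ with $i<j$, then there is a substring in $\widehat{O}$. To see this, choose such a pair with $j-i$ minimal. Consecutive convergents are adjacent in the Farey graph and hence distinct, so $j-i\geq2$. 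Set $r=i+1$ and $s=j-1$, and let $g=M_{b_0}\dotsb M_{b_{r-1}}$. The substring $(b_r,\dots,b_s)$ has convergents $g^{-1}(V_{i+1}),\dots,g^{-1}(V_j)$. These are pairwise distinct by minimality, and the last one is $g^{-1}(V_j)=g^{-1}(V_i)=g^{-1}(V_{r-1})=0$. So this substring lies in $\widehat{O}$. Consequently, a continued fraction with no substring from $\widehat{O}$ has pairwise distinct convergents, that is, its convergents form an infinite path in the Farey graph with no repeated vertex.

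It remains to show that an infinite Farey path without repeated vertices converges in $\widehat{\mathbb{C}}$. I expect this to be the main obstacle. My first attempt is to use planarity of the Farey graph: edges do not cross, and every edge $\langle V_n,V_{n+1}\rangle$ separates the extended real line into two complementary arcs. Because the path never returns to $V_n$, each edge is crossed at most in a controlled way. From this one can extract a nested sequence of Farey intervals (arcs cut out by edges of the path, or by edges of the fan around a vertex the path has left for good) that eventually contain all later vertices. Such nested Farey intervals shrink either to a single point or to a limiting vertex approached through its fan. In both cases the convergents converge. This is the known criterion from the Farey-graph treatment of integer continued fractions in \cites{BeHoSh2012,ShSt2022}, which I would cite if available, supplying the nested-interval argument otherwise.
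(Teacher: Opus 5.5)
Your proposal is correct and follows essentially the paper's route. The paper likewise checks symmetry and non-redundancy through the Farey-path picture, and it likewise extracts a member of $\widehat{O}$ from a repeated convergent by pulling back with $M_{b_0}\dotsb M_{b_r}$; it uses a first-return window $V_r=V_s$ where you use a minimal-gap repetition, and it argues directly rather than by contraposition. The step you flag as the main obstacle (a Farey path with no repeated vertices converges, equivalently a divergent integer continued fraction has infinitely many equal convergents) is exactly the fact the paper takes from \cite{ShSt2022}*{Theorem~1.3}, so citing it puts your proof at the same level of rigour as the paper's.
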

\begin{proof}
The set $\widehat{O}$ is symmetric because $[-b_0,-b_1,\dots,-b_k]=-[b_0,b_1,\dots,b_k]$, and it is without redundancy because if one member of $\widehat{O}$ were a proper substring of another, then the corresponding subpath would begin and end at the same vertex, which contradicts the absence of repeated vertices.

Now let $[b_0,b_1,\dotsc]$ be an integer continued fraction that diverges. We write the convergents as $V_i=A_i/B_i$. It is known, and straightforward to prove, that infinitely many $V_i$ are equal (see \cite{ShSt2022}*{Theorem~1.3}). Thus, for any positive integer $n$, we can find integers $r$ and $s$ with $n<r<s$ for which $V_r,V_{r+1},\dots,V_{s-1}$ are pairwise distinct and $V_r=V_s$. By applying the matrix 
\[
\begin{pmatrix}
-B_{r} & A_{r}\\
-B_{r+1} & A_{r+1}
\end{pmatrix}
\]
to $V_{r+1},V_{r+2},\dots,V_s$ we see that the continued fraction $[b_{r+1},b_{r+2},\dots,b_{s-1}]$ has value 0 and no repeated convergents. Hence $(b_{r+1},b_{r+2},\dots,b_{s-1})\in\widehat{O}$, so every tail of $[b_0,b_1,\dotsc]$ has a substring that belongs to $\widehat{O}$, as required.
\end{proof}

A \emph{rooted triangulated polygon} is a triangulated polygon with a distinguished oriented boundary edge. Each rooted triangulated polygon is specified uniquely by a triple $A=(a;u;c)$, where $a$ and $c$ are the vertex degrees at the initial and final vertices of the oriented edge, and $u=(u_1,u_2,\dots,u_m)$ is the list of vertex degrees between $a$ and $c$ obtained by traversing the boundary of the polygon in the opposite direction to the orientation of the edge. We also allow a degenerate triple $E=(0;\varnothing;0)$. Let  $\tau(A)=(a,u)$; this is a trimmed quiddity sequence, and every trimmed quiddity sequence arises in this way.

Given two rooted triangulated polygons $A=(a;u;c)$ and $B=(b;v;d)$ we define
\[
A\ast B = (a+1;u,b+c+1,v;d+1),
\]
which is the rooted triangulated polygon obtained by gluing $A$ and $B$ along their distinguished edges to two edges of an oriented triangle, with opposite orientations matching. There is a well-known correspondence between rooted triangulated polygons and rooted planar binary trees, and this operation is the usual grafting operation for binary trees considered in, for example, \cite{AgSo2006}.  Each nondegenerate rooted triangulated polygon $C$ has a unique decomposition $C=A\ast B$ (where $A$ or $B$ may be $E$), and consequently every rooted triangulated polygon can be obtained from products of $E$ (note that $\ast$ is not associative). We define
\[
F=E\ast E=(1;1;1),\quad G=E\ast F=(1;2,1;2),\quad H=G\ast E=(2;2,1,3;1).
\]
Let $\eta(C)$ denote the number of triangles in a rooted triangulated polygon. Then $\eta(A\ast B) = \eta(A)+\eta(B)+1$, $\eta(E)=0$, $\eta(F)=1$, $\eta(G)=2$, and $\eta(H)=3$.

Given a set $X$ of ordered pairs of integers, we say that $A=(a;u;c)$ is \emph{$X$-free} if $u$ has no substring that belongs to $X$. Notice that if $A\ast B$ is $X$-free, then both $A$ and $B$ are $X$-free. We also describe a trimmed quiddity sequence as $X$-free if it has no substring that belongs to $X$.

\begin{lemma}\label{lemma70}
Every trimmed quiddity sequence of a triangulated polygon has a substring that belongs to $O_i$, for each $i=1,2,\dots,9$.
\end{lemma}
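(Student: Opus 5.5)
We argue by structural induction on rooted triangulated polygons, using the unique decomposition $C=A\ast B$. Fix $i$ and set $X=O_i$. Suppose, for contradiction, that some rooted triangulated polygon $C$ with $\eta(C)\geq 1$ has an $X$-free trimmed quiddity sequence $\tau(C)=(a,u)$. Since $\tau(C)$ contains $u$ together with its first letter, $\tau$-freeness is slightly stronger than $C$ being $X$-free. We therefore work with a family of \emph{allowed boundary data}. For each $i$, we determine the finite collection $\mathcal{S}_i$ of triples $(a,\text{first letter of }u,\text{last letter of }u,c)$, suitably truncated, that can occur for $X$-free polygons. We then show that this collection is closed under $\ast$ and never produces an $X$-free trimmed sequence.

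Concretely, write $C=A\ast B=(a+1;u,b+c+1,v;d+1)$. The new letters created are $a+1$ at the front, the glued letter $b+c+1$ in the middle, and $d+1$ at the end. The only new length-two substrings are $(a+1,u_1)$, $(u_m,b+c+1)$ and $(b+c+1,v_1)$, together with the degenerate cases in which $u$ or $v$ is empty. So whether $C$ is $X$-free, and whether $\tau(C)$ is $X$-free, depends only on the boundary data of $A$ and $B$: the end degrees $a,c$ and $b,d$, and the first and last letters of $u$ and $v$. Because every pair in $O_i$ involves only letters $\leq 5$, we cap every recorded value at $6$, meaning ``$\geq 6$''. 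Capping respects $+1$ and sums in the relevant comparisons, so the state space is finite: at most $7^4$ states together with flags for empty $u$. Starting from $E$, we compute the closure of the reachable set of states of $X$-free polygons under $\ast$. This is a finite fixed-point computation, the algorithm alluded to in the text.

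For each $i$ we then check that no reachable state $(a;u;c)$ with $u$ nonempty has $(a,u_1)\notin X$. Equivalently, every $X$-free polygon $C=A\ast B$ has first pair $(a_A+1,u_1)\in X$ or is degenerate. Degenerate cases are also handled: for $F$ we have $\tau(F)=(1,1)\in X$, since $(1,1)$ lies in every $O_i$. A hand-checkable version of this should follow from a few key observations. The first letter $a+1$ of $\tau(C)$ is small only when $A$ is small, and $u_1$ is small (equal to $1$ or $2$) when $A$ has an ear near its root. Ears correspond to letters $1$, and each $O_i$ contains $(1,1)$ and $(1,2)$. Consequently the neighbours of every $1$ in an $X$-free sequence are forced to be large, and this propagates constraints, as in the polygons $(2,1,3)^2$ and $(2,1,4)^3$ of Figure~2, which show that the lists are sharp.

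The main obstacle is making the closure computation rigorous and presentable for all nine sets simultaneously. In particular, capping at $6$ must be shown to be sound: whether the pairs $(b+c+1,\cdot)$ and $(a+1,\cdot)$ lie in $X$ must be determined by the capped values. This holds because all letters in $O_i$ are at most $5$. I would try first to organise the argument as an induction on $\eta(C)$ with an explicit invariant per $O_i$, for instance ``every $X$-free $C$ has $u_1\geq 3$, and $a\geq 2$ or $u_1\geq 4$''. If a uniform invariant fails, I would fall back on presenting the reachable-state tables and verifying closure case by case.
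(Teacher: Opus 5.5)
\textbf{Genuine gap: the closure computation is never performed.} Your reduction is sound as far as it goes. Whether $A\ast B$ and $\tau(A\ast B)$ are $X$-free is determined by the end degrees, the first and last letters of $u$ and $v$, and emptiness flags. Capping at $6$ is harmless because every letter occurring in $O_i$ is at most $5$. But the proposal stops exactly where the proof has to happen. The lemma \emph{is} the output of your fixed-point computation, and you never run it for any $O_i$: the sentence ``for each $i$ we then check\dots'' is the whole content of the statement, asserted rather than verified.

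\textbf{The suggested hand invariant is false.} The invariant ``every $X$-free $C$ has $u_1\geq 3$, and $a\geq 2$ or $u_1\geq 4$'' fails immediately:
\begin{enumerate}
\item $F=(1;1;1)$ is $X$-free for every set $X$ of pairs, yet $u_1=1$;
\item for $i\geq 2$, $G=(1;2,1;2)$ is $O_i$-free (since $(2,1)\notin O_i$), yet $a=1$ and $u_1=2$.
\end{enumerate}
The truth runs the other way. The $X$-free polygons are exactly those with \emph{small} leading letters, which is why $(a,u_1)$ lands in $X$.

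\textbf{The missing idea.} The paper's observation makes the verification immediate: the set of $X$-free rooted triangulated polygons themselves, not merely their boundary states, is finite and tiny. It is $\{E,F\}$ for $O_1$, $\{E,F,G\}$ for $O_2,O_3$, and $\{E,F,G,H\}$ for $O_4,\dots,O_9$.

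This follows from a minimal-counterexample argument. Suppose $C=A\ast B$ is $X$-free, not on the list, and has $\eta(C)$ minimal. Then $A$ and $B$ are $X$-free with smaller $\eta$, so both lie on the list. It therefore suffices to check the handful of products of listed polygons whose $\eta$ exceeds that of every listed polygon. For $O_4$, for instance:
\begin{enumerate}
\item $E\ast H=(1;3,2,1,3;2)$ contains $(3,2)$;
\item $F\ast H=(2;1,4,2,1,3;2)$ contains $(4,2)$;
\item $G\ast F=(2;2,1,4,1;2)$ contains $(4,1)$;
\item $G\ast H=(2;2,1,5,2,1,3;2)$ contains $(1,5)$;
\end{enumerate}
and similarly for the remaining products. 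One then reads off $\tau(F)=(1,1)$, $\tau(G)=(1,2,1)$ and $\tau(H)=(2,2,1,3)$, which contain $(1,1)$, $(1,2)$ and $(2,2)$ respectively.

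Your closure computation, if carried out, would collapse to exactly these few states. As written, though, the proposal is a plan for a proof rather than a proof.
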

\begin{proof}
We start with $O_1$. Suppose, in order to reach a contradiction, that $C$ is an $O_1$-free rooted triangulated polygon of at least two triangles for which $\eta(C)$ is minimal. We can write $C=A\ast B$, where $A$ and $B$ are $O_1$-free and $\eta(A), \eta(B)<\eta(C)$, so we must have $\eta(A),\eta(B)\leq 1$. Hence $A,B\in\{E,F\}$. However, none of $E\ast F$, $F\ast E$, and $F\ast F$ are $O_1$-free (and $\eta(E\ast E)=1$), so we have a contradiction. Consequently, the only nondegenerate rooted triangulated polygon that is $O_1$-free is $F$. But $\tau(F)=(1,1)\in O_1$, so no trimmed quiddity sequence is $O_1$-free.

Next consider $O_i$, for $i\in\{2,3\}$. A quick check shows that the only  $O_i$-free rooted triangulated polygons $C$ with $\eta(C)\leq 2$ are $E$, $F$, and $G$.  Suppose, in order to reach a contradiction, that $C$ is  an $O_i$-free rooted triangulated polygon of at least three triangles for which $\eta(C)$ is minimal. We can write $C=A\ast B$, where $A$ and $B$ are $O_i$-free and $\eta(A),\eta(B)\leq 2$. Then $A,B\in\{E,F,G\}$. However, no such products with $\eta(C)\geq 3$ are $O_i$-free, which is a contradiction. Consequently, the only nondegenerate rooted triangulated polygons that are $O_i$-free are $F$ and $G$. But $\tau(F)=(1,1)$ and $\tau(G)=(1,2,1)$, and $(1,1),(1,2)\in O_i$, so no trimmed quiddity sequence is $O_i$-free.

Last we consider $O_i$, for $i\in\{4,5,\dots,9\}$. This time we observe that the only $O_i$-free rooted triangulated polygons $C$ with $\eta(C)\leq 3$ are $E$, $F$, $G$, and $H$. Arguing as before, we see that in fact these are the only rooted triangulated polygons that are $O_i$-free.  Since $\tau(F)=(1,1)$, $\tau(G)=(1,2,1)$, $\tau(H)=(2,2,1,3)$, and  $(1,1),(1,2),(2,2)\in O_i$, we deduce that no trimmed quiddity sequence is $O_i$-free.
\end{proof}

An alternative approach to Lemma~\ref{lemma70} would be to use Cuntz's lists of words contained in quiddity cycles described in \cite{Cu2018}. 

We can now prove Theorem~\ref{theoremA}.

\begin{proof}[Proof of Theorem~A]
First we prove that $\overline{O_i}$ is an exclusion set, for each $i\in\{1,2,\dots,9\}$. Consider a divergent continued fraction $[b_0,b_1,\dotsc]$. By Lemma~\ref{lemma83}, each tail contains a substring $s$ that belongs to $\widehat{O}$. If $s=0$, then $s\in \overline{O_i}$. If $s\neq 0$, then one of $\pm s$ is a trimmed quiddity sequence, so $s$ itself has a substring that belongs to $\overline{O_i}$, by Lemma~\ref{lemma70}. Hence $[b_0,b_1,\dotsc]\in \Ex(\overline{O_i})$.

Since the collection of triangulated polygons is closed under reflection, we see that the collection of trimmed quiddity sequences is closed under reversal. Hence we can apply the argument of the preceding paragraph with the reversal of $s$ to deduce that the reversals of the $\overline{O_i}$ are also exclusion sets.

It remains to prove that $\overline{O_1},\overline{O_2},\dots,\overline{O_9}$ are the \emph{only} minimal exclusion sets of length two. Suppose then that $O$ is some finite exclusion set of length two; we will prove that $\overline{O}_i\preceq O$, for some $i$, up to reversal. For this we use the triangulated polygons (a)--(i) of Figure~\ref{figure2}. The captions give the quiddity sequences, which are periods of periodic (divergent) continued fractions. For example, from (f) we have that $[\overline{2,1,3,4}]$ is a divergent continued fraction.

Since $O$ is an exclusion set, we see from (a) that $O$ contains $1$ or $(1,1)$. If $1\in O$, then $\overline{O}_1\preceq O$, so let us suppose that $1\notin O$, in which case $(1,1)\in O$. From (b) and (c) we see that $O$ contains one of $2$, $(1,2)$, or $(2,1)$ and one of  3, $(1,3)$, or $(3,1)$. Suppose that $2\in O$. Then we must have $\overline{O}_1\preceq O$, up to reversal. The alternative, which we assume henceforth, is that $2\notin O$. Let us also assume that $(1,2)\in O$ (the alternative $(2,1)\in O$ leads to a similar argument with words reversed).

Suppose that $(2,1)\in O$. We also assume that either $3\in O$ or $(1,3)\in O$ (again, $(3,1)\in O$ leads to a similar argument with reversal). In both cases we have $\overline{O}_1\preceq O$. Let us assume then that $(2,1)\notin O$. From (g), since $N$ can be chosen arbitrarily, we see that $(2,2)\in O$.  Also, by (h) we have $4\in O$, $(1,4)\in O$, or $(4,1)\in O$, and by (e)  we have $4\in O$, $(1,4)\in O$, or $(4,2)\in O$. If $3\in O$ or $(1,3)\in O$, then either $\overline{O_2}\preceq O$ or $\overline{O_3}\preceq O$.

Suppose instead that $3,(1,3)\notin O$. Then $(3,1)\in O$ (by (c)) and $(3,2)\in O$ (by (d)). Then, by (f) we have $4\in O$, $(3,4)\in O$, or $(4,2)\in O$, and by (i) we have $5\in O$, $(1,5)\in O$, or $(5,2)\in O$. If $4\in O$, then either $\overline{O_4}\preceq O$ or $\overline{O_5}\preceq O$, so let us assume $4\notin O$. The remaining possibility is that the following four restrictions hold: $(1,4)\in O$ or $(4,1)\in O$; $(1,4)\in O$ or $(4,2)\in O$; $(3,4)\in O$ or $(4,2)\in O$; and $5\in O$, $(1,5)\in O$, or $(5,2)\in O$. These give us the six  cases $\overline{O_i}\preceq O$, for $i=4,5,\dots,9$.

Finally, we prove that $\overline{O_1},\overline{O_2},\dots,\overline{O_9}$ and their reversals are pairwise incomparable. Consider any two of these sets, $X$ and $Y$. Neither set is contained in the other, so we may choose $s\in X\setminus Y$. Then the continued fraction $[1,s,2,s,3,s,\dotsc]$ belongs to $\Ex(X)$ but not $\Ex(Y)$. Repeating this argument in the opposite direction proves that $X$ and $Y$ are incomparable. Hence these eighteen sets are the full collection of minimal exclusion sets.
\end{proof}

Corollary~\ref{corollaryC} is closely related to Theorem~\ref{theoremA}, and the proofs of the two results are similar.

\begin{proof}[Proof of Corollary~\ref{corollaryC}]
Lemma~\ref{lemma70} tells us that every trimmed quiddity sequence contains a member of $O_i$, and since the collection of trimmed quiddity sequences is reversible we see that each such sequence also contains a member of the reversal of $O_i$. Hence the sets $O_i$ and their reversals are unavoidable sets of length two.

To see that the sets $O_i$ and their reversals are the full collection of minimal unavoidable sets of length two, we can use the nine quiddity sequences of Figure~\ref{figure2}, following the argument for Theorem~\ref{theoremA}. In this case it is immediate that the sets $O_i$ and their reversals are pairwise incomparable, so they form the full collection of minimal elements.
\end{proof}

We finish this section with a brief discussion of exclusion sets of length three. Our experiments suggest that there are hundreds of minimal exclusion sets over the integers of length three, each containing at least thirteen positive tuples, and that $\overline{O}$ is one of these exclusion sets, where
\[
\begin{aligned}
O=\{& (1,1),(1,2,1),(2,1,2),(1,2,2),(1,3,1),(2,1,3),(2,1,4),\\
&(2,2,1),(3,1,2),(4,1,2),(1,2,3),(3,2,1),(1,5,1)\}. 
\end{aligned}
\]
These experiments suggest the following conjecture.

\begin{conjecture}
Among minimal integer exclusion sets of length three, the minimum number of positive tuples is thirteen. There are exactly nine sets attaining this minimum, and $\overline{O}$ is the only reversible one.
\end{conjecture}

\section{Infinite exclusion sets over the integers}

In this section we complete the proof of Theorem~\ref{theoremB}.

\begin{lemma}\label{lemma7}
Let $s=(b_0,b_1,\dots,b_m)\in\widehat{O}$, where $s\neq 0$. Then $q=1/[b_0,b_1,\dots,b_{m-1}]$ is an integer, and the continued fraction
\[
[q+1,s,q-1,s,q+2,s,q-2,s,q+3,s,q-3,\dotsc]
\]
diverges.
\end{lemma}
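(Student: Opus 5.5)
The plan is to compute the matrix of the block $s$ explicitly, observe that each block ``coefficient $a$ followed by $s$'' acts as a translation, and then exhibit two subsequences of convergents with different limits.

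\emph{Step 1: the matrix of $s$.} Let $P=M_{b_0}M_{b_1}\dotsb M_{b_m}=\begin{pmatrix}A_{m+1}&-A_m\\B_{m+1}&-B_m\end{pmatrix}$, with $A_k,B_k$ formed from the coefficients of $s$. Since $s\in\widehat{O}$ we have $A_{m+1}/B_{m+1}=[b_0,\dots,b_m]=0$, so $A_{m+1}=0$. Then $\det P=1$ gives $A_mB_{m+1}=1$, so $A_m=B_{m+1}=\varepsilon\in\{\pm1\}$.

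Next I check that $q$ is a well-defined integer. We have $[b_0,\dots,b_{m-1}]=A_m/B_m$. This is not $\infty$, since otherwise $V_m=V_0$. The coincidence $V_m=V_0$ is impossible when $m\geq1$ by distinctness of convergents, and when $m=0$ it would force $s=(b_0)$ with $[b_0]=0$, that is $s=0$, which is excluded. Also $A_m/B_m\neq0$, since $A_m=\varepsilon$. Hence $q=B_m/A_m=\varepsilon B_m\in\mathbb{Z}$, and
\[
P=\varepsilon\begin{pmatrix}0&-1\\1&-q\end{pmatrix}.
\]

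\emph{Step 2: each block is a translation.} A direct multiplication gives
\[
M_aP=\varepsilon\begin{pmatrix}-1&q-a\\0&-1\end{pmatrix}=-\varepsilon\begin{pmatrix}1&a-q\\0&1\end{pmatrix}.
\]
So, as a M\"obius map, the block $(a,s)$ is the translation $z\mapsto z+(a-q)$.

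The continued fraction is built from blocks with $a=q+1,q-1,q+2,q-2,\dotsc$, which are translations by $1,-1,2,-2,\dotsc$. Let $W_j$ be the product of the first $j$ blocks. Then, up to sign, $W_{2k}$ is the identity and $W_{2k+1}$ is translation by $k+1$.

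\emph{Step 3: two subsequences of convergents.} Consider the convergent obtained after the first $j$ blocks, then the coefficient $a$ of block $j+1$, then $b_0,\dots,b_{m-1}$. It equals
\[
W_jM_aM_{b_0}\dotsb M_{b_{m-1}}(\infty)=W_jM_a(1/q)=W_j(a-q).
\]
For $j=2k$ we have $a=q+k+1$, so this convergent is $k+1$, which tends to $\infty$. For $j=2k+1$ we have $a=q-(k+1)$, so this convergent is $(k+1)-(k+1)=0$. Thus the sequence of convergents has a subsequence tending to $\infty$ and a subsequence constantly equal to $0$, so the continued fraction diverges in $\widehat{\mathbb{C}}$.

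The only delicate point is the bookkeeping in Steps 1 and 3. One must pin down that $A_m=\pm1$ and that $q=B_m/A_m$ (rather than its reciprocal), and one must identify the intermediate convergent $M_{b_0}\dotsb M_{b_{m-1}}(\infty)=1/q$ correctly. The signs $\pm\varepsilon$ are harmless because a scalar factor does not change a M\"obius map.
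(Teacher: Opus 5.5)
Your proof is correct and is essentially the paper's argument. The paper also shows $M_{b_0}\dotsb M_{b_m}=\varepsilon\begin{pmatrix}0&-1\\1&-q\end{pmatrix}$ and then uses $M_kQM_{k'}=-\varepsilon M_{k+k'-q}$, which is exactly your observation that $M_aQ$ acts as translation by $a-q$. The differences are cosmetic: the paper reads off infinitely many convergents equal to $q$ and to $\infty$, whereas you read off convergents $0$ and $k+1\to\infty$, and you check a little more carefully that $q$ is a genuine integer.
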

\begin{proof}
We have $[b_0,b_1,\dots,b_m]=0$, so $A_{m+1}=0$ and $B_{m+1}=\varepsilon$, where $\varepsilon\in\{-1,1\}$, in which case
\[
M_{b_0}M_{b_1}\dotsb M_{b_m}
=
\begin{pmatrix}A_{m+1} & -A_{m}\\B_{m+1} & -B_{m}\end{pmatrix}
=Q,\quad\text{where } Q=\varepsilon \begin{pmatrix}0 & -1\\1 & -x\end{pmatrix},
\]
for some integer $x$. Then $A_m=\varepsilon$ and $B_m=\varepsilon x$, so $x=q$. Now, for  $k,k'\in\mathbb{Z}$, we have  
\(
M_kQM_{k'}=-\varepsilon M_{k+k'-q}.
\)
It follows that 
\[
M_{q+1}QM_{q-1}QM_{q+2}QM_{q-2}Q\dotsb M_{q+n}QM_{q-n} = -\varepsilon\begin{pmatrix}q & -1\\1 & 0\end{pmatrix}.
\]
Hence there are infinite subsequences of convergents equal to $q$ and $\infty$, so the continued fraction diverges.
\end{proof}

We can now prove Theorem~\ref{theoremB}.

\begin{proof}[Proof of Theorem~\ref{theoremB}]
Lemma~\ref{lemma83} tells us that $\widehat{O}$ is an exclusion set. It remains to prove that $\Ex(\widehat{O})\subsetneq \Ex(O)$ for any finite exclusion set $O$. For this, observe first that $\widehat{O}$ and $O$ both contain $0$. Now choose any element $s\in\widehat{O}$ with $s\neq 0$ and consider the divergent continued fraction of Lemma~\ref{lemma7}. Choose a sufficiently large positive integer $n$  that every integer coefficient that appears in the (finite) exclusion set $O$ has modulus less than $n-|q|$. Now, we know that $[q+n,s,q-n,s,q+n+1,s,q-n-1,\dotsc]$ contains a substring that belongs to $O$. Since all the coefficients of this continued fraction besides those of $s$ have modulus at least $n-|q|$, we deduce that $O$ contains an element that is a substring of $s$. The element $s$ was chosen arbitrarily from $\widehat{O}$, so $\Ex(\widehat{O})\subset \Ex(O)$.

Consider now an element $t$ of $\widehat{O}$ of length greater than any element of $O$. Let $s\in O$ be a substring of $t$, and consider the continued fraction $[N_1,s,N_2,s,N_3,s,\dotsc]$, where $N_1,N_2,\dotsc$ is an increasing sequence of positive integers, with $N_j$ chosen recursively as follows. Suppose that $N_1,N_2,\dots,N_{j-1}$ have been defined. Let us write $s=(s_0,s_1,\dots,s_m)$. We define $F_b(z)=b-1/z$ and $G=F_{s_0}F_{s_1}\dotsb F_{s_m}$ (with functional composition). Let $H=F_{N_1}GF_{N_2}G\dotsb F_{N_{j-1}}G$. Now, the rationals $N_j$ and $N_j-1/[s_0,s_1,\dots,s_k]$, for $k=0,1,\dots,m$, are distinct, because $s$ is a substring of an element of $\widehat{O}$, and by choosing $N_j$ to be sufficiently large we can assume that these rationals are distinct from $H^{-1}(V_i)$ for any of the convergents $V_i$ of $[N_1,s,N_2,s,\dotsc,N_{j-1},s]$ determined already. It follows that $H(N_j)$ and $H(N_j-1/[s_0,s_1,\dots,s_k])$, which are the new convergents, are pairwise distinct and are distinct from the preceding convergents. Therefore $[N_1,s,N_2,s,N_3,s,\dotsc]$ has no repeated convergents, so it does not belong to $\Ex(\widehat{O})$. However, it does belong to $\Ex(O)$, so $\Ex(\widehat{O})\subsetneq \Ex(O)$, as required. 
\end{proof}

We remark that it is relatively straightforward to construct infinite exclusion sets over the integers that exclude strictly fewer convergent continued fractions than $\widehat{O}$. 

\section{Gaussian integer exclusion sets}\label{section39}

The proof of Theorem~\ref{theoremD} requires a delicate analytic argument, which will occupy much of our attention for this section. We define $G_e(z)=1-1/(ez)$, where $e\in\mathbb{Z}[i]\setminus\{0\}$, and we let
\[
\Lambda = \{0,1,2,3,\pm i, 1\pm i, 2\pm i, 3\pm i, 2\pm 2i\}.
\]
This set has the property that if $a,b\neq 0$ and $ab\in \Lambda$, then $\{a,b\}\in \overline{U_1}\cap\overline{U_2}$. We define $H=\{z:\Re z>1/2\}$, a half-plane, and we define $D(c,r)=\{z:|z-c|<r\}$, the open disc centred at $c$ of radius $r$. The following four lemmas will facilitate a nesting argument, familiar to continued fraction theory, which will allow us to prove that $\overline{U_1}$ and $\overline{U_2}$ are exclusion sets. 

\begin{lemma}\label{lemma50}
Let $e\in\mathbb{Z}[i]\setminus\Lambda$. If $e\notin \{1\pm 2i, 3\pm 2i\}$, then $G_e(H)\subset H$.
\end{lemma}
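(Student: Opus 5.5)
We need $G_e(z)=1-1/(ez)$ to map $H=\{\Re z>1/2\}$ into $H$. Since $\Re G_e(z)>1/2$ is equivalent to $\Re(1/(ez))<1/2$, it suffices to show that $1/(ez)\in\{w:\Re w<1/2\}$ for every $z\in H$.

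The plan is to compute the image of $H$ under $z\mapsto 1/z$ and then rotate and scale by $1/e$. The map $w=1/z$ sends the boundary line $\Re z=1/2$ to the circle $|w-1|=1$ (it passes through $0$ and $2$ and is symmetric about the real axis). It sends $H$ to the open disc $D(1,1)$, since $z=1\mapsto 1$. Hence $1/(ez)$ ranges over $e^{-1}D(1,1)=D(1/e,1/|e|)$. Therefore $G_e(H)\subset H$ holds exactly when this disc lies in the closed half-plane $\{\Re w\le 1/2\}$. That happens if and only if
\[
\Re\frac1e+\frac1{|e|}\le\frac12,
\qquad\text{equivalently}\qquad 2\Re\bar e+2|e|\le |e|^2 .
\]
Writing $e=x+iy$ and $r=|e|$, the condition is $r^2-2r\ge 2x$. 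If $e=0$ the map is undefined, but $0\in\Lambda$, so this case does not arise.

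It remains to check which Gaussian integers fail the inequality $|e|^2-2|e|-2\Re e\ge 0$, that is, which lie in the region $|e|^2<2|e|+2\Re e$. In polar form, with $\Re e=r\cos\theta$, this region is $r<2+2\cos\theta$, a cardioid. One must show that the Gaussian integers strictly inside it are exactly the elements of $\Lambda\setminus\{0\}$ together with $1\pm2i$ and $3\pm2i$; Gaussian integers on the boundary satisfy the inequality with equality and are allowed.

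The cardioid satisfies $r<4$ and $|y|\le r|\sin\theta|\le$ about $2.6$. So only finitely many lattice points need checking, with $-1\le x\le 3$ and $|y|\le 2$, and it is cleanest to organise the check by real part.

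For $x\le 0$, the condition $r<2r\cos\theta/r$ forces $r<2+2x/r\le 2$, which gives only $e=\pm i$. Indeed $e=-1$ gives $1\ge -2+2$, which is not less, so it is excluded. For $x=1,2,3$ one checks $y=0,\pm1,\pm2$ directly:
\begin{itemize}
\item $x=1$: the points inside are $1$, $1\pm i$ and $1\pm 2i$, the last two by $5<2\sqrt5+2$.
\item $x=2$: the points inside are $2$, $2\pm i$ and $2\pm2i$.
\item $x=3$: the points inside are $3$, $3\pm i$ and $3\pm2i$, the last by $13<2\sqrt{13}+6\approx13.2$.
\end{itemize}
For $y=\pm3$, the borderline candidates are $1\pm 3i$ ($10<2\sqrt{10}+2\approx8.3$ fails) and $2\pm3i$ ($13$ versus $2\sqrt{13}+4\approx11.2$, fails). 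For $x=4$, the point $e=4$ gives $16<16$, which fails. So every remaining Gaussian integer lies outside the open cardioid.

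This finite check, exactly matching the cardioid of Figure~\ref{figure5}, is the only real obstacle. The care needed is in handling the boundary cases $e=-1$ and $e=4$, where equality holds and the disc is tangent to the line $\Re w=1/2$; tangency still gives containment in the open half-plane, because the disc is open.
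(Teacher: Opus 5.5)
Your proof is correct and takes the paper's route: you identify the image of $H$ as a disc ($D(1/e,1/|e|)$ for $1/(ez)$, equivalently $G_e(H)=D(1-1/e,1/|e|)$), reduce containment to the cardioid inequality $|e|^2\ge 2|e|+2\Re e$, and finish with a finite lattice-point check. The paper bounds $|e|<4$ via $|e|^2-2\Re e-2|e|\ge |e|(|e|-4)$, whereas you bound the extent of the cardioid directly. One small gap: in your $x\le 0$ case you should also rule out $-1\pm i$, which satisfy $|e|<2$; they do lie outside the cardioid, since $2<2\sqrt2-2$ is false.
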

\begin{proof}
We have $G_e(H)=D(1-1/e,1/|e|)$. This lies in $H$ if and only if $1-\Re(1/e)-1/|e|\geq 1/2$, or, equivalently, if and only if $|e-1|^2\geq 1+2|e|$. This inequality describes the complement of the cardioid of Figure~\ref{figure5}. Notice that 
\[
|e-1|^2-1-2|e|=|e|^2-2\Re e-2|e|\geq |e|(|e|-4).
\]
Thus the inequality $|e-1|^2\geq 1+2|e|$ holds whenever $|e|\geq 4$, and if $|e|<4$ and $e\notin \Lambda$, then the inequality holds for all values of $e$ besides $1\pm 2i$ and $3\pm 2i$.
\end{proof}

In the next lemma we write $G_pG_q$ for the functional composition $G_p\circ G_q$, and we use similar notation later on.

\begin{lemma}\label{lemma51}
Let $p,q\in\mathbb{Z}[i]\setminus\Lambda$. Then $G_pG_q(H)\subset H$ if and only if $|pq-p-q|\geq |p|+|q|$.
\end{lemma}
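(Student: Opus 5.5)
The plan is to avoid computing the image disc $G_pG_q(H)$ directly. Instead I will use the equivalence
\[
G_pG_q(H)\subset H \iff G_q(H)\subset G_p^{-1}(H),
\]
which holds because $G_p$ is a M\"obius transformation, hence a bijection of $\widehat{\mathbb{C}}$. Both sides of the right-hand inclusion can be described explicitly, and the condition then becomes a comparison of two discs.

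\emph{Step 1: the set $G_q(H)$.} As in the proof of Lemma~\ref{lemma50}, we have $G_q(H)=D(1-1/q,1/|q|)$ for any $q\neq 0$. To see this, note that $z\mapsto 1/z$ sends $H$ to $D(1,1)$. Scaling by $1/q$ gives $D(1/q,1/|q|)$, and the map $w\mapsto 1-w$ then gives $D(1-1/q,1/|q|)$. This only needs $q\neq0$, which holds since $0\in\Lambda$.

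\emph{Step 2: the set $G_p^{-1}(H)$.} Solving $w=1-1/(pz)$ gives $G_p^{-1}(w)=1/(p(1-w))$. The map $w\mapsto 1-w$ sends $H$ to $\{u:\Re u<1/2\}=\{u:|u-1|>|u|\}$. Putting $v=1/u$, this set becomes $\{v:|1-v|>1\}$, the complement of the closed disc $\overline{D}(1,1)$ (with $\infty$ included). Dividing by $p$, we get
\[
G_p^{-1}(H)=\{z:|pz-1|>1\}=\widehat{\mathbb{C}}\setminus \overline{D}(1/p,1/|p|).
\]
This step is the one requiring care. I must check that the point $\infty$ and the point $0$ (where $1/u$ is singular) are placed correctly, and that open sets and closed discs are matched correctly. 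For example, $0\notin G_p^{-1}(H)$, and indeed $0\in \overline{D}(1/p,1/|p|)$.

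\emph{Step 3: the disc comparison.} The inclusion $G_q(H)\subset G_p^{-1}(H)$ now says that the open disc $D(1-1/q,1/|q|)$ is disjoint from the closed disc $\overline{D}(1/p,1/|p|)$. For an open disc and a closed disc of positive radii, this holds if and only if the distance between their centres is at least the sum of their radii; external tangency is allowed, because the tangency point lies outside the open disc. So the condition reads
\[
\left|1-\frac1q-\frac1p\right|\geq \frac1{|q|}+\frac1{|p|}.
\]
Multiplying through by $|pq|$ gives $|pq-p-q|\geq |p|+|q|$, which proves the lemma.

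The hypothesis $p,q\notin\Lambda$ is used only to guarantee $p,q\neq0$. I expect the only real obstacle to be the bookkeeping of boundary and open/closed sets in Step~2, together with the tangency case in Step~3.
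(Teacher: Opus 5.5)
Your proposal is correct and is essentially the paper's proof. Both compute $G_q(H)=D(1-1/q,1/|q|)$, characterise $G_p(w)\in H$ by $|pw-1|>1$, and reduce the claim to that open disc avoiding $\overline{D}(1/p,1/|p|)$; your disc-disjointness argument (including the tangency case) spells out the final equivalence, which the paper states directly via the parametrisation $1-1/q+\zeta/|q|$, $|\zeta|<1$.
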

\begin{proof}
We have $G_q(H)=D(1-1/q,1/|q|)$. Also, $G_p(w)\in H$ if and only if $|G_p(w)-1|<|G_p(w)-0|$, and this inequality can be simplified to give $|pw-1|>1$. Hence $G_pG_q(H) \subset H$ if and only if $|p(1-1/q+\zeta/|q|)-1|>1$ for all $|\zeta|<1$. This holds if and only if $|pq-p-q|\geq |p|+|q|$.
\end{proof}

\begin{lemma}\label{lemma52}
Let $p\in \{1\pm 2i, 3\pm 2i\}$ and $d\in\mathbb{Z}[i]\setminus\{0\}$, and let $q=pd$. If $\{p,d\}\not\in \overline{U_2}$, then 
\[
G_pG_q(H)\subset H\quad\text{and}\quad G_qG_p(H)\subset H.
\] 
\end{lemma}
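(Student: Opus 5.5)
The plan is to reduce both inclusions to the criterion of Lemma~\ref{lemma51}. That criterion is symmetric in its two arguments, so one application will give both $G_pG_q(H)\subset H$ and $G_qG_p(H)\subset H$.

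First I check the hypotheses of Lemma~\ref{lemma51}. Clearly $p\notin\Lambda$. For $q=pd$, the stated property of $\Lambda$ says that if $pd\in\Lambda$ (with $p,d\neq0$) then $\{p,d\}\in\overline{U_1}\cap\overline{U_2}\subset\overline{U_2}$. This is excluded by hypothesis, so $q\notin\Lambda$.

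Substituting $q=pd$ into the criterion $|pq-p-q|\geq|p|+|q|$ and dividing by $|p|$ gives the equivalent condition
\[
|d(p-1)-1|\geq 1+|d|.
\]
So the whole lemma reduces to the following claim: for $p-1\in\{\pm2i,\,2\pm2i\}$, this inequality holds for every $d\neq0$ with $\{p,d\}\notin\overline{U_2}$.

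Write $w=d(p-1)$. I expand $|w-1|^2=|w|^2-2\Re w+1$ and bound $\Re w\leq|w|$, which gives $|w-1|^2\geq |w|^2-2|w|+1$. I compare this with $(1+|d|)^2$.
\begin{enumerate}
\item If $p=1\pm2i$, then $|w|=2|d|$. The inequality $|w-1|^2\geq(1+|d|)^2$ then follows from $3|d|^2\geq6|d|$, i.e.\ it holds once $|d|\geq2$.
\item If $p=3\pm2i$, then $|w|=2\sqrt2|d|$. The inequality follows from $7|d|^2\geq(2+4\sqrt2)|d|$, which holds for every nonunit $d$ (these have $|d|\geq\sqrt2$).
\end{enumerate}

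This leaves a finite check of small $d$.
\begin{enumerate}
\item For $p=3\pm2i$, only the four units remain. For each unit I compute $|u(2\pm2i)-1|$ directly and expect it to be at least $\sqrt5>2$ in every case.
\item For $p=1\pm2i$, the remaining cases are $d$ a unit or $d\in\{\pm1\pm i\}$. I would compute $|2id-1|$ (or $|-2id-1|$) for each of these eight values of $d$.
\end{enumerate}

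The cases that fail, for instance $p=1+2i$ with $d=-i$, where $w=2$ and $|w-1|=1<2$, must be shown to satisfy $\{p,d\}\in\overline{U_2}$. For $p=1+2i,\ d=-i$ the product is $pd=2-i$. Since the operations $\{a,b\}\mapsto\{ua,u^{-1}b\}$ and conjugation preserve the product up to conjugation, and $\{1,2+i\}\in U_2$, this pair lies in $\overline{U_2}$. The other failures should similarly have products such as $3\pm i$ or $1\pm3i$. These I would match against the pairs $\{1,3+i\}$, $\{1+i,2-i\}$ and $\{1+i,2+i\}$ of $U_2$, using the operations $\{a,b\}\mapsto\{ua,u^{-1}b\}$ and conjugation to align the actual factorisation, not just the product.

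The main obstacle is this bookkeeping. Membership in $\overline{U_2}$ is not determined by the product alone: $\{1,1+3i\}\notin\overline{U_2}$ while $\{1+i,2+i\}\in\overline{U_2}$. So for each failing pair I must check the actual factorisation, not just the value of $pd$. I would also confirm that the pair $\{1,1+2i\}$, which is absent from $U_2$, passes the inequality: with $d=1$ we get $|2i-1|=\sqrt5\geq2$, so it does.
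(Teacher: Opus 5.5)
Your proposal is correct and follows the paper's proof step for step. You use the symmetric criterion of Lemma~\ref{lemma51} to reduce both inclusions to $|d(p-1)-1|\geq |d|+1$, settle $|d|\geq 2$ (or $|d|\geq\sqrt2$ when $p=3\pm 2i$) by the triangle inequality, and then check the finitely many small $d$ by hand. Your bookkeeping is more explicit than the paper's ``by checking the few cases''. The failures you anticipate are exactly the right ones: for $p=1+2i$ they are $d=-i,\,1-i,\,-1-i$, with products $2-i$, $3+i$, $1-3i$. The first two are already excluded by your observation that $q\notin\Lambda$, and $\{1+2i,-1-i\}$ lies in the closure of $\{1+i,2+i\}$.
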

\begin{proof}
Observe that $p\notin\Lambda$. Additionally, $q\notin\Lambda$ because $\{p,d\}\not\in \overline{U_2}$. Therefore, by Lemma~\ref{lemma51}, it suffices to prove that $|d(p-1)-1|\geq |d|+1$. This can quickly be verified when $p=3\pm 2i$, splitting the cases $|d|=1$ and $|d|\geq \sqrt{2}$. Suppose that $p=1\pm 2i$. Again, the inequality is easily verified if $|d|\geq 2$. If $|d|$ is 1 or $\sqrt{2}$, then, by checking the few cases, we see that the inequality only fails for $\{p,d\}\in \overline{U_2}$, as required.
\end{proof}

\begin{lemma}\label{lemma53}
Let $p,r\in \{1\pm 2i, 3\pm 2i\}$, let $u$ be a unit, and let $q=upr$. Then $G_pG_qG_r(H)\subset H$.
\end{lemma}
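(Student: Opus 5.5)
Throughout, $\overline{D}(c,r)$ denotes the closed disc $\{z:|z-c|\leq r\}$. The plan is a direct nesting computation with crude modulus bounds, in three stages: describe $G_r(H)$ as a disc avoiding $0$, bound $G_qG_r(H)$ inside a disc about $1$, and then check which points $G_p$ sends into $H$. I do not plan to invoke Lemmas~\ref{lemma50}--\ref{lemma52}. The reason is that $G_r(H)$ is \emph{not} contained in $H$ when $r\in\{1\pm 2i,3\pm 2i\}$, so the three maps have to be handled together.

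\emph{Step 1.} As in the proof of Lemma~\ref{lemma50}, $G_r(H)=D(c,\rho)$ with $c=1-1/r$ and $\rho=1/|r|$. There are two cases.
\begin{itemize}
\item For $r=1\pm 2i$ we have $c=(4\pm 2i)/5$, so $|c|=2/\sqrt5$ and $\rho=1/\sqrt5$.
\item For $r=3\pm 2i$ we have $c=(10\pm 2i)/13$, so $|c|=\sqrt{104}/13\approx 0.784$ and $\rho=1/\sqrt{13}\approx 0.277$.
\end{itemize}
In both cases $|c|>\rho$, so $0\notin G_r(H)$. Every $w\in G_r(H)$ satisfies the strict inequality $|w|>|c|-\rho$. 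This difference equals $1/\sqrt5$ in the first case and is about $0.507$ in the second.

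\emph{Step 2.} Since $q=upr$, we have $|q|=|p||r|$. For $w\in G_r(H)$,
\[
|G_q(w)-1|=\frac{1}{|q||w|}<\frac{1}{|p||r|(|c|-\rho)}.
\]
\begin{itemize}
\item For $r=1\pm2i$: $|r|(|c|-\rho)=\sqrt5\cdot(1/\sqrt5)=1$, so $G_qG_r(H)\subset D(1,1/|p|)$.
\item For $r=3\pm2i$: $|r|(|c|-\rho)\approx 1.83$, so $G_qG_r(H)\subset D(1,0.55/|p|)$.
\end{itemize}
So in all cases $G_qG_r(H)\subset D(1,1/|p|)$, an open disc. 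The unit $u$ plays no role here, because only $|q|$ enters the estimate.

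\emph{Step 3.} By the computation in the proof of Lemma~\ref{lemma51}, $G_p(w)\in H$ if and only if $|pw-1|>1$. Equivalently, $w$ lies outside the closed disc $\overline{D}(1/p,1/|p|)$. It therefore suffices to show that $D(1,1/|p|)$ and $\overline{D}(1/p,1/|p|)$ are disjoint, which holds when $|1-1/p|\geq 2/|p|$.
\begin{itemize}
\item For $p=3\pm2i$: $|1-1/p|\approx 0.784$ while $2/|p|\approx 0.555$, so the inequality holds.
\item For $p=1\pm2i$: $|1-1/p|=2/\sqrt5=2/|p|$ exactly. The two discs are externally tangent, and since one of them is open they are still disjoint.
\end{itemize}
Hence $G_pG_qG_r(H)\subset H$ in every case.

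The main obstacle is the extremal case $p,r\in\{1\pm 2i\}$, with $|q|=5$. There Steps~2 and~3 are both sharp: the bound on $|G_q(w)-1|$ is exactly tangent to the forbidden disc. The argument survives only because the strict inequality $|w|>|c|-\rho$, coming from $G_r(H)$ being open, carries through to give an \emph{open} disc $D(1,1/|p|)$. I will write that case out carefully, and check the remaining combinations, where there is slack, with the numerical bounds above.
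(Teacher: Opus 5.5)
Your proof is correct and is essentially the paper's argument. Both push $H$ through $G_r$ and then $G_q$ as discs and test the criterion $|pw-1|>1$, using only $|p-1|\geq 2$, $|r-1|\geq 2$, and the openness of the discs in the tangent case $p,r\in\{1\pm 2i\}$. The paper computes $G_qG_r(H)$ exactly and then applies the triangle inequality, whereas you bound it by the smallest disc about $1$ via $\inf_{w\in G_r(H)}|w|$. The two lose exactly the same information: each reduces to $|p-1|\geq |r-1|/(|r-1|-1)$. So your version is just a slightly more economical packaging of the same estimate.
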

\begin{proof}
A direct calculation gives 
\[
G_qG_r(H) = D\mleft(1-\frac{|r|^2-r}{q\Delta_r},\frac{|r|}{|q|\Delta_r}\mright),
\]
where $\Delta_r=|r-1|^2-1>0$. Also, $G_p(w)\in H$ if and only if $|pw-1|>1$. Since $q=upr$, we see that $G_pG_qG_r(H)\subset H$ provided
\[
\mleft|p-1-\frac{\bar{r}-1}{u\Delta_r}\mright|\geq 1+\frac{1}{\Delta_r},
\]
or, equivalently, $|\Delta_r(p-1)-u^{-1}(\bar{r}-1)|\geq \Delta_r+1$. Since $|p-1|\geq 2$, we have
\[
|\Delta_r(p-1)-u^{-1}(\bar{r}-1)|-\Delta_r\geq \Delta_r-|r-1|,
\]
and one can check that $\Delta_r-|r-1|\geq 1$ for $r\in \{1\pm 2i, 3\pm 2i\}$.
\end{proof}

We can now prove that $\overline{U_1}$ and $\overline{U_2}$ are exclusion sets.

\begin{proposition}\label{prop52}
The sets $\overline{U_1}$ and $\overline{U_2}$ are exclusion sets over the Gaussian integers.
\end{proposition}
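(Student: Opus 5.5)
The plan is to use a Worpitzky/parabola-type nesting argument. I will rewrite the negative continued fraction as a composition of the maps $G_e$, where $e$ runs over products of adjacent coefficients, and then group these maps into short blocks, each of which maps $H$ into itself, using Lemmas~\ref{lemma50}--\ref{lemma53}.

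\textbf{Setting up.} Let $[b_0,b_1,\dotsc]$ be a Gaussian integer continued fraction outside $\Ex(\overline{U_j})$. Passing to a tail, I may assume $b_n\neq0$ for all $n$ and that no adjacent pair $(b_n,b_{n+1})$ lies in $\overline{U_j}$. Write the tails formally as $t_n=b_n-1/t_{n+1}$ and put $w_n=t_n/b_n$. Then
\[
w_n=1-\frac{1}{b_nb_{n+1}w_{n+1}}=G_{e_n}(w_{n+1}),\qquad e_n=b_nb_{n+1}.
\]
So, up to the outer factor $b_0$, the approximants are the images $G_{e_0}G_{e_1}\dotsb G_{e_N}(\infty)$, or some fixed normalising point. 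By the remark after the definition of $\Lambda$, no $e_n$ lies in $\Lambda$. For $U_1$, the characterisation \eqref{equation98} also rules out $e_n=1\pm2i$. For $U_2$ it does not, because $\{1,1+2i\}\notin\overline{U_2}$. Let $P=\{1\pm2i,3\pm2i\}$. Every $e\notin\Lambda\cup P$ satisfies $G_e(H)\subset H$ by Lemma~\ref{lemma50}, so only the indices with $e_n\in P$ need treatment.

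\textbf{Grouping.} Every element of $P$ is a Gaussian prime. Hence $e_n\in P$ forces one of $b_n,b_{n+1}$ to be a unit, and the other to be an element of $P$ up to a unit. I will attach each such $G_{e_n}$ to the neighbouring map on the side of the non-unit coefficient.

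Suppose $b_{n+1}=u$ is a unit and $b_n=pu^{-1}$ with $p\in P$. Then $e_{n-1}=b_{n-1}b_n=pd$, where $d=b_{n-1}u^{-1}$. The pair $\{p,d\}$ is the image of $\{b_{n-1},b_n\}$ under the unit operation, so $\{p,d\}\notin\overline{U_2}$. Lemma~\ref{lemma52} then gives $G_qG_p(H)\subset H$ with $q=e_{n-1}$, and the mirror-image situation gives $G_pG_q(H)\subset H$.

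The one configuration in which two $P$-maps compete for the same neighbour is the coefficient pattern (unit, $\sim p$, $\sim r$, unit). In that case the middle product is $e=upr$, and Lemma~\ref{lemma53} handles the triple $G_pG_{upr}G_r$.

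When a single unit $b_k$ produces both $e_{k-1},e_k\in P$, both neighbours $b_{k\pm1}$ are non-units. The two $P$-maps can then be paired outward, with $G_{e_{k-2}}G_{e_{k-1}}$ and $G_{e_k}G_{e_{k+1}}$. I will check carefully that these rules give a well-defined partition of a tail of $(e_n)$ into blocks $T_1,T_2,\dotsc$ with $T_m(H)\subset H$ for every $m$. This requires a short case analysis of how units and $P$-coefficients can alternate. Because no $e_n$ lies in $\Lambda$, no two units are adjacent and no unit is adjacent to $1\pm i$ or to small coefficients, which keeps the number of cases small.

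\textbf{Convergence.} With $S_m=T_1\dotsb T_m$, the sets $S_m(H)$ form a nested sequence of discs or half-planes, and they contain the relevant approximants from some point on. I expect the main obstacle to be showing that these nested sets shrink to a point, so that the approximants actually converge. My plan is to exploit discreteness. Each $S_m$ is represented by a Gaussian integer matrix with unit determinant, and once $\infty\notin S_m(H)$ the radius of $S_m(H)$ is comparable to $1/|\gamma_m|\,|\gamma_m'|$ for suitable denominator entries. If the radii stayed bounded below, the finitely many bounded choices of these entries would force two partial products to agree modulo a bounded set. That would make the approximants eventually periodic in a way incompatible with strict inclusion, because $\overline{T_m(H)}\subset H$ for the blocks coming from strict inequalities.

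If this route proves awkward, I will instead appeal to the classical form of the parabola/half-plane theorem: a continued fraction whose tail maps preserve a half-plane converges in $\widehat{\mathbb{C}}$ provided the series of $|b_n|$ diverges (Stern--Stolz). That hypothesis holds automatically here, because the $b_n$ are nonzero Gaussian integers.
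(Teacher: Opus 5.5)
Your reduction to the maps $G_{e_n}$ and your grouping into blocks $(s)$, $(p,pd)$, $(pd,p)$, $(p,upr,r)$ are the same as the paper's. In both, Lemmas~\ref{lemma50}, \ref{lemma52} and~\ref{lemma53} give $T_m(H)\subset H$. The gap is the convergence step, which is where the real analysis lies, and neither of your two routes supplies it.

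The inclusions $T_m(H)\subset H$ give nothing by themselves, since the identity also maps $H$ into $H$. Some uniform non-degeneracy of the blocks is indispensable, and your discreteness sketch does not provide it:
\begin{enumerate}
\item In the $w$-coordinates, $S_m$ is not in $SL_2(\mathbb{Z}[i])$; it is twisted by the scalings $z\mapsto b_0z$ and $z\mapsto b_nz$.
\item When $S_m(H)$ is a disc and $S_m$ ends at the coefficient $b_n$, its radius works out to $|b_n|/\bigl(|b_0|(|B_{n+1}|^2-|B_{n-1}|^2)\bigr)$. A lower bound on the radius therefore controls a difference of squares, not the entries, so there is nothing finite to pigeonhole.
\item Two partial products agreeing ``modulo a bounded set'' would not make the coefficient sequence periodic.
\item The strictness you invoke fails for admissible blocks. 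For $e\in\{4,\pm 2i\}$ there is equality in Lemma~\ref{lemma50}, so $G_e(H)$ is tangent to $\partial H$. For $[\overline{2}]$, every block is the parabolic map $G_4$, tangent at $\tfrac12$, yet the continued fraction converges. Lemma~\ref{lemma52} also gives only a non-strict inequality.
\end{enumerate}

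Your fallback does not apply either. The parabola theorem with the Stern--Stolz series $\sum|b_n|$ is a statement about the \emph{elementary} maps. It needs every $G_{e_n}$ to preserve the half-plane, that is, $|e_n-1|^2\geq 1+2|e_n|$, and this fails exactly for $e_n\in\{1\pm2i,3\pm2i\}$. Used that way, it only yields the weaker set $U$ of \eqref{equation99}. For the grouped maps $T_m$ you would have to formulate and prove an analogue, and divergence of $\sum|b_n|$ is not the relevant hypothesis there.

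The missing idea is a Hillam--Thron condition. Fix a point outside $\overline{H}$, say $-5$, and show that $T_m(-5)$ lies in one fixed compact set $K\subset H$ for every possible block:
\begin{enumerate}
\item $|G_s(-5)-1|=1/(5|s|)$;
\item $|G_pG_{pd}(-5)-1|=|p+1/(5d)|^{-1}\leq(\sqrt5-1/5)^{-1}<\tfrac12$, and similarly for $G_{pd}G_p$;
\item each of the finitely many maps $G_pG_{upr}G_r$ is checked individually to send $-5$ into $H$.
\end{enumerate}
A conjugated form of the Hillam--Thron theorem (Beardon's version) then gives $S_m\to c$ locally uniformly on $H$. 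The discs $S_m(H)$ need not shrink to a point for this.

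You must also handle approximants that stop in the middle of a block, which your phrase ``they contain the relevant approximants from some point on'' passes over. Such an approximant is $S_k(z)$ with $z\in\{1\}\cup\{G_e(1):e\notin\Lambda\}\cup\{G_pG_{upr}(1)\}$. One checks that this set is a compact subset of $H$, using $|e-1|>1$ for $e\notin\Lambda$, $G_e(1)\to 1$, and a direct estimate for the finitely many points $G_pG_{upr}(1)$.

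Two smaller points:
\begin{enumerate}
\item Your remark that no two units are adjacent and no unit is adjacent to $1\pm i$ is false. Since $-1,-1-i\notin\Lambda$, the pairs $(1,-1)$ and $(-1,1+i)$ are allowed. This is harmless for the grouping, because those products are not in $P$.
\item For $\overline{U_1}$, the conclusion $\{p,d\}\notin\overline{U_2}$ does not follow from $\{b_{n-1},b_n\}\notin\overline{U_1}$ alone. It does hold, because only $p=3\pm2i$ can occur, and $|pd|\geq\sqrt{13}$ exceeds the modulus of every product arising in $\overline{U_2}$.
\end{enumerate}
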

\begin{proof}
We begin with $\overline{U_2}$. Let $\mathcal{S}=\{1\pm 2i, 3\pm 2i\}$. Consider a Gaussian integer continued fraction $[b_0,b_1,\dotsc]$ that does not belong to $\Ex(\overline{U_2})$; we must prove that it converges. After deleting a finite prefix and relabelling, we can assume that $b_n\neq 0$, for $n\geq 0$, and $\{b_{n-1},b_n\}\notin \overline{U_2}$, for $n\geq 1$. Let $e_n=b_{n-1}b_n$; then $e_n\notin\Lambda$. Let $F_b(z)=b-1/z$. Then $F_{b_0}F_{b_1}\dotsb F_{b_{n-1}}(b_n)=[b_0,b_1,\dots,b_{n}]$. Observe that 
\[
G_{b_{i-1}b_i}(z) = 1-\frac{1}{b_{i-1}b_iz}  = b_{i-1}^{-1}F_{b_{i-1}}(b_i z). 
\]
Then $G_{e_1}G_{e_2}\dotsb G_{e_n}(z)=b_0^{-1}F_{b_0}F_{b_1}\dotsb F_{b_{n-1}}(b_n z)$. Hence $[b_0,b_1,\dotsc]$ converges if and only if the sequence $G_{e_1}G_{e_2}\dotsb G_{e_n}(1)$ converges, and to establish the proposition we will prove that this latter sequence converges.

To enable us to apply Lemmas~\ref{lemma50} to~\ref{lemma53}, we partition $e_1,e_2,\dotsc$ into blocks as follows. Suppose $e_n\in\mathcal{S}$. Then $e_n$ is prime, so one of $b_{n-1}$ or $b_n$ is a unit. In the first case we join $e_n$ to $e_{n+1}$, and in the second we join $e_n$ to $e_{n-1}$ (and take no action  if $n=1$). All remaining elements of the sequence form singleton blocks. This process creates blocks of length at most three. Moreover, if $e_n$ is connected to both $e_{n-1}$ and $e_{n+1}$, then neither $b_{n-1}$ nor $b_n$ is a unit, so $e_n\notin\mathcal{S}$.

By deleting $b_0$ if need be (which does not affect convergence), and relabelling the resulting sequence, we can partition $e_1,e_2,\dotsc$ into blocks each of one of the four forms $(s)$, $(p,pd)$, $(pd,p)$, or $(p,upr,r)$, where $s\notin\mathcal{S}$, $p,r\in\mathcal{S}$, $d\in\mathbb{Z}[i]\setminus\{0\}$, and $u$ is a unit. Furthermore, in the middle two cases, $\{p,d\}\notin \overline{U_2}$. To see this for the first of those middle cases (the other is similar), we have $p=e_n=vb_n$, where $v=b_{n-1}$ is a unit. Then $e_{n+1}=b_nb_{n+1}=pd$, where $d=v^{-1}b_{n+1}$. Hence $\{p,d\}=\{vb_n,v^{-1}b_{n+1}\}\notin \overline{U_2}$.

Let $T$ denote one of the maps $G_s$, $G_pG_{pd}$, $G_{pd}G_p$, or $G_pG_{upr}G_r$, corresponding to these four cases. By Lemmas~\ref{lemma50}, \ref{lemma52}, and \ref{lemma53}, we have $T(H)\subset H$. We will prove that there is a compact subset $K$ of $H$ for which $T(-5)\in K$ for any choice of $d$, $p$, $r$, and $u$. (Here $-5$ is just some distinguished point in the complement of $\overline{H}$.)

Let $K=\overline{D(1,\rho)}$, where $\rho=1/(\sqrt5-1/5)<1/2$; this is a compact subset of $H$. In the first of the four cases we have $|T(-5)-1|=1/(5|s|)<\rho$, so $T(-5)\in K$. In the second case $T=G_pG_{pd}$ we have
\[
|T(-5)-1| = \mleft|\frac{1}{p+1/(5d)}\mright|\leq \frac{1}{\sqrt5 - 1/5}=\rho,
\]
so $T(-5)\in K$. The third case is similar to the second. In the fourth case $T=G_pG_{upr}G_r$. There are only finitely many such maps, so we just need to check that $T(-5)\in H$ for each one, and then we can revise our definition of $K$ by adjoining these additional points. We have
\[
T(-5)=1-\frac{u(5r+1)}{up(5r+1)-5}.
\]
Observe that $|5r+1|\geq 5|r|-1>5$. Hence
\[
|(up(5r+1)-5)-u(5r+1)|\geq |p-1||5r+1|-5\geq 2|5r+1|-5> |u(5r+1)|.
\]
By rearranging this we see that $|T(-5)|>|T(-5)-1|$, so $T(-5)\in H$, as required.

Returning to our partition of $e_1,e_2,\dotsc$, let $T_k$ denote the map $T$ associated to the $k$th block. Then $T_k$ is a M\"obius transformation that satisfies $T_k(H)\subset H$ and $T_k(-5)\in K$. We can now apply a version of the Hillam--Thron theorem from continued fraction theory (such as a conjugated version of \cite{Be2003}*{Theorem~1}, with $-5$ in place of $\infty$ and $H$ in place of a disc) to deduce that $S_n=T_1T_2\dotsb T_n$ converges uniformly on compact subsets of $H$ to a constant $c$. It remains to show that $G_{e_1}G_{e_2}\dotsb G_{e_n}(1)\to c$.

Let $C_1=\{G_e(1):e\in\mathbb{Z}[i]\setminus\Lambda\}\cup\{1\}$. We will prove that this is a compact subset of $H$. To see this, observe that $|e-1|>1$ for $e\in\mathbb{Z}[i]\setminus\Lambda$, so $|G_e(1)-1|=|1/e|<|1-1/e|=|G_e(1)|$. Thus $G_e(1)\in H$. Furthermore, $G_e(1)\to 1$ as $e\to \infty$, from which the assertion follows.

Let $C_2=\{G_pG_{upr}(1): p,r\in\mathcal{S}, u\in\{\pm 1, \pm i\}\}$, a finite set. We will prove that $C_2\subset H$. Since $G_pG_{upr}(1)=(ur(p-1)-1)/(upr-1)$, we have
\[
|G_pG_{upr}(1)|\geq \frac{|r||p-1|-1}{|upr-1|}>\frac{|r|}{|upr-1|}=|G_pG_{upr}(1)-1|.
\]
Hence $G_pG_{upr}(1)\in H$, from which the assertion follows. We now define $C=C_1\cup C_2\cup\{1\}$, a compact subset of $H$. Then $S_n\to c$ uniformly on $C$.

Let $k(n)$ denote the number of complete blocks in the sequence $e_1,e_2,\dots,e_n$. Then $k(n)\to \infty$ as $n\to\infty$. There is a point $z_n$ equal to $1$, $G_e(1)$, or $G_pG_{upr}(1)$, for some $e\in\mathbb{Z}[i]\setminus\Lambda$ or $p,r\in\mathcal{S}$ and $u\in\{\pm 1, \pm i\}$, with $G_{e_1}G_{e_2}\dotsb G_{e_n}(1)=S_{k(n)}(z_n)$. Since $z_n\in C$, we have that $S_{k(n)}(z_n)\to c$, so $G_{e_1}G_{e_2}\dotsb G_{e_n}(1)\to c$, as required.

We can apply the same proof to show that $\overline{U_1}$ is an exclusion set, after replacing $\mathcal{S}$ with the smaller set $\mathcal{S}=\{3\pm 2i\}$. Notice that, in this case, the hypothesis $\{p,d\}\notin\overline{U_2}$ of Lemma~\ref{lemma52} is automatically satisfied.
\end{proof}

Finally we can prove Theorem~\ref{theoremD}.

\begin{proof}[Proof of Theorem~\ref{theoremD}]
Proposition~\ref{prop52} tells us that $\overline{U_1}$ and $\overline{U_2}$ are reversible exclusion sets over the Gaussian integers.

Suppose now that $U$ is any reversible exclusion set of length two over the Gaussian integers. Consider the eleven periodic continued fractions with periods $(1)$, $(1,2)$, $(1,3)$, or any of the eight words of Figure~\ref{figure6}(a)--(h). Each of these continued fractions corresponds to a closed path in the Gaussian Farey graph, as shown in Figure~\ref{figure2}(a)--(c) and Figure~\ref{figure6}, so all eleven continued fractions diverge (a fact that one can also easily establish algebraically). Now, for each of these continued fractions, the pairs $\{b_{n-1},b_n\}$ for $n\geq 1$ are \emph{all equal}, up to symmetry. Explicitly, the pairs are, in order, $\{1,1\}$, $\{1,2\}$, $\{1,3\}$, $\{1,i\}$, $\{1,1+i\}$, $\{1,2+i\}$, $\{1,3+i\}$, $\{1,2+2i\}$, $\{1+i,1-i\}$, $\{1+i,2\}$, and $\{1+i,2-i\}$. Consequently, for each of these pairs $\{a,b\}$ we must have $a\in U$, $b\in U$, or $\{a,b\}\in U$. Additionally, from Figure~\ref{figure74},  either (i) $1\in U$, $1+2i\in U$, or $\{1,1+2i\}\in U$; or (ii) $1+i\in U$, $2+i\in U$, or $\{1+i,2+i\}\in U$. Hence either $\overline{U_1}\preceq U$ or $\overline{U_2}\preceq U$. Observe that $\overline{U_1}$ and $\overline{U_2}$ themselves are incomparable because $[\overline{1,1+2i}]\in \Ex(\overline{U_1})\setminus \Ex(\overline{U_2})$ and $[\overline{1+i,2+i}]\in \Ex(\overline{U_2})\setminus \Ex(\overline{U_1})$. Hence $\overline{U_1}$ and $\overline{U_2}$ are the only minimal reversible exclusion sets over the Gaussian integers, as required. 
\end{proof}

Theorem~\ref{theoremD} classifies the minimal \emph{reversible} exclusion sets over the Gaussian integers of length two; the problem of classifying the ordered exclusion sets remains open.

\begin{question}
Classify the minimal exclusion sets over the Gaussian integers of length two.
\end{question}

A first step towards answering this question would be to obtain an exclusion set of length two that excludes fewer  continued fractions than the exclusion set of Dani and Nogueira  \cite{DaNo2014}*{Theorem~6.7} specified in the introduction.

Another, separate open problem is to determine exclusion sets for Eisenstein integers.

\begin{question}
Classify the minimal reversible exclusion sets over the Eisenstein integers of length two.
\end{question}

The Farey graph associated to the Eisenstein integers is the 1-skeleton of a tessellation of hyperbolic three-space by tetrahedra (see, for example, \cite{FeKaSeTu2023}). This could conceivably make  exclusion sets over Eisenstein integers more tractable to compute than those over Gaussian integers, for which the associated Farey graph is the 1-skeleton of an octahedral tessellation.

We finish with the remark that with more careful (if lengthy) estimates -- which we do not include here -- one can show that if $[b_0,b_1,\dotsc]$ contains no substrings that belong to $\overline{U_1}$ then the denominators $B_n$ of the convergents of this continued fraction satisfy $|B_{n+1}/B_{n-1}|>1+3/(2n)$, for $n>1$. Similar growth estimates were used by Dani and Nogueira \cite{DaNo2014} to prove density of the values  of certain Gaussian binary quadratic forms. It would be of interest to extend this result to other classes of quadratic forms arising from the optimal exclusion set $\overline{U_1}$.

\begin{figure}[H]
\centering
\MFigSlim{\begin{tikzpicture}[x=1.85cm,y=1.85cm]
  \BBoxOneOne\GridOneOne
  \Edge{(0,1)}{(1,1)}\Edge{(1,1)}{(0.5,0.5)}\Edge{(0.5,0.5)}{(1,0)}\Edge{(1,0)}{(0,0)}
  \BendEdge{(0,0)}{(-0.3,0.5)}\BendEdge{(-0.3,0.5)}{(0,1)}
  \foreach \p in {(0,0),(1,0),(0.5,0.5),(1,1),(0,1),(-0.3,0.5)} {\Spot{\p}}
  \node[rational,below left=2pt] at (0,0) {$0$};
  \node[rational,below right=2pt] at (1,0) {$1$};
  \node[rational,right=3pt] at (0.5,0.5) {$\tfrac{1}{1-i}$};
  \node[rational,above right=2pt] at (1,1) {$1$};
  \node[rational,above left=2pt] at (0,1) {$i$};
  \node[rational,left=2pt] at (-0.3,0.5) {$\infty$};
    
  \node[gint,above right=2pt] at (0,0) {$-1$};
  \node[gint,above left=1pt,xshift=-5pt] at (1,0) {$i$};
  \node[gint,below=2pt] at (0.5,0.5) {$1$};
  \node[gint,below left=1pt,xshift=-4pt] at (1,1) {$-i$};
  \node[gint,below right=2pt] at (0,1) {$-1$};
  \node[gint,right=2pt] at (-0.3,0.5) {$i$};
\end{tikzpicture}}{(a) $(i,-1,-i,1,i,-1)$}
\hfill
\MFigSlim{\begin{tikzpicture}[x=1.85cm,y=1.85cm]
  \BBoxOneOne\GridOneOne
  \Edge{(0,0)}{(1,0)}\Edge{(1,0)}{(1,1)}\Edge{(1,1)}{(0.5,0.5)}\Edge{(0.5,0.5)}{(0,0)}
  \foreach \p in {(0,0),(1,0),(1,1),(0.5,0.5)} {\Spot{\p}}
  \node[rational,below left=2pt] at (0,0) {$0$};
  \node[rational,below right=2pt] at (1,0) {$1$};
  \node[rational,above right=2pt] at (1,1) {$1$};
  \node[rational,above left=1pt] at (0.5,0.5) {$\tfrac{1}{1-i}$};
  \node[gint,above right=2pt,xshift=2pt,yshift=-1pt] at (0,0) {$-i$};
  \node[gint,above left=2pt] at (1,0) {$-1+i$};
  \node[gint,below left=2pt,xshift=2pt,yshift=-2pt] at (1,1) {$1$};
  \node[gint,right=2pt] at (0.5,0.5) {$1+i$};
\end{tikzpicture}}{(b) $(1,1+i,-i,-1+i)^{\pm}$}
\hfill
\MFigWide{\begin{tikzpicture}[
  x=4cm,y=4cm,
]
  \path[use as bounding box] (-0.90,-0.1) rectangle (0.90,1.07);

  \Edge{(0.5,0.5)}{(0,0)}
  \Edge{(0,0)}{(-0.2,0.4)}
  \Edge{(-0.2,0.4)}{(0,0.5)}
  \Edge{(0,0.5)}{(0.2,0.6)}
  \Edge{(0.2,0.6)}{(0,1)}
  \Edge{(0,1)}{(-0.5,0.5)}
  \Edge{(-0.5,0.5)}{(0,0)}
  \Edge{(0,0)}{(0.2,0.4)}
  \Edge{(0.2,0.4)}{(0,0.5)}
  \Edge{(0,0.5)}{(-0.2,0.6)}
  \Edge{(-0.2,0.6)}{(0,1)}
  \Edge{(0,1)}{(0.5,0.5)}

  \foreach \p in {(0.5,0.5),(0,0),(-0.2,0.4),(0,0.5),(0.2,0.6),(0,1),(-0.5,0.5),(0.2,0.4),(-0.2,0.6)} {\Spot{\p}}

  \node[rational,right=2pt] at (0.5,0.5) {$\tfrac12+\tfrac12i$};
  \node[rational,below=2pt] at (0,0) {$0$};
  \node[rational,above left=2pt,xshift=5pt,yshift=-1pt] at (-0.2,0.4) {$-\tfrac15+\tfrac25i$};
  \node[rational,above=2pt] at (0,0.5) {$\tfrac{i}{2}$};
  \node[rational,below right=2pt,xshift=-2pt,yshift=1pt] at (0.2,0.6) {$\tfrac15+\tfrac35i$};
  \node[rational,above=2pt] at (0,1) {$i$};
  \node[rational,left=2pt] at (-0.5,0.5) {$-\tfrac12+\tfrac12i$};
  \node[rational,above right=2pt,xshift=-2pt,yshift=-1pt] at (0.2,0.4) {$\tfrac15+\tfrac25i$};
  \node[rational,below left=2pt,xshift=5pt,yshift=1pt] at (-0.2,0.6) {$-\tfrac15+\tfrac35i$};

  \node[gint,left=2pt] at (0.5,0.5) {$1$};
  \node[gint,right=3pt] at (0,0) {$2+i$};
  \node[gint,right=2pt,yshift=-2pt] at (-0.2,0.4) {$1$};
  \node[gint,right=5pt,yshift=-0.5pt] at (0,0.5) {$2-i$};
  \node[gint,left=2pt,yshift=2pt] at (0.2,0.6) {$1$};
  \node[gint,right=3pt] at (0,1) {$2+i$};
  \node[gint,right=2pt] at (-0.5,0.5) {$1$};
  \node[gint, left=3pt] at (0,0) {$2-i$};
  \node[gint,left=2pt,yshift=-2pt] at (0.2,0.4) {$1$};
  \node[gint,left=5pt,yshift=-0.5pt] at (0,0.5) {$2+i$};
  \node[gint,right=2pt,yshift=2pt] at (-0.2,0.6) {$1$};
  \node[gint, left=3pt] at (0,1) {$2-i$};
\end{tikzpicture}}{(c) $(1,2+i,1,2-i)^3$}

\vspace{0.4em}

\MFig{\begin{tikzpicture}[x=\ScaleOneOne cm,y=\ScaleOneOne cm]
  \BBoxOneOne\GridOneOne
  \Edge{(0,0)}{(0.5,0)}\Edge{(0.5,0)}{(1,0)}\Edge{(1,0)}{(1.5,0.5)}\Edge{(1.5,0.5)}{(1,1)}\Edge{(1,1)}{(0.5,1)}\Edge{(0.5,1)}{(0,1)}\Edge{(0,1)}{(-0.5,0.5)}\Edge{(-0.5,0.5)}{(0,0)}
  \foreach \p in {(0,0),(-0.5,0.5),(0,1),(0.5,1),(1,1),(1.5,0.5),(1,0),(0.5,0)} {\Spot{\p}}
  \node[rational,below=3pt] at (0,0) {$0$};
  \node[rational,left=3pt] at (-0.5,0.5) {$-\tfrac12+\tfrac12i$};
  \node[rational,above=3pt] at (0,1) {$i$};
  \node[rational,above=3pt] at (0.5,1) {$\tfrac12+i$};
  \node[rational,above=3pt] at (1,1) {$1+i$};
  \node[rational,right=3pt] at (1.5,0.5) {$\tfrac32+\tfrac12i$};
  \node[rational,below=3pt] at (1,0) {$1$};
  \node[rational,below=3pt] at (0.5,0) {$\tfrac{1}{2}$};
  \node[gint,above right=2pt,xshift=-1pt] at (0,0) {$3+i$};
  \node[gint,right=2pt] at (-0.5,0.5) {$1$};
  \node[gint,below right=2pt,yshift=-1pt] at (0,1) {$3-i$};
  \node[gint,below=3pt] at (0.5,1) {$1$};
  \node[gint,below left=2pt,xshift=1pt,yshift=-1pt] at (1,1) {$3+i$};
  \node[gint,left=2pt] at (1.5,0.5) {$1$};
  \node[gint,above left=2pt,xshift=1pt] at (1,0) {$3-i$};
  \node[gint,above=2pt] at (0.5,0) {$1$};
\end{tikzpicture}}{(d) $(1,3+i,1,3-i)^2$}
\quad
\MFig{\begin{tikzpicture}[x=\ScaleOneOne cm,y=\ScaleOneOne cm]
  \BBoxOneOne\GridOneOne
  \Edge{(0,0)}{(0,0.5)}\Edge{(0,0.5)}{(0,1)}\Edge{(0,1)}{(0.5,1)}\Edge{(0.5,1)}{(1,1)}\Edge{(1,1)}{(1,0.5)}\Edge{(1,0.5)}{(1,0)}\Edge{(1,0)}{(0.5,0)}\Edge{(0.5,0)}{(0,0)}
  \foreach \p in {(0,0),(0,0.5),(0,1),(0.5,1),(1,1),(1,0.5),(1,0),(0.5,0)} {\Spot{\p}}
  \node[rational,below left=2pt] at (0,0) {$0$};
  \node[rational,left=3pt] at (0,0.5) {$\tfrac{i}{2}$};
  \node[rational,above left=2pt] at (0,1) {$i$};
  \node[rational,above=3pt] at (0.5,1) {$\tfrac12+i$};
  \node[rational,above right=2pt] at (1,1) {$1$};
  \node[rational,right=3pt] at (1,0.5) {$1+\tfrac{i}{2}$};
  \node[rational,below right=2pt] at (1,0) {$1$};
  \node[rational,below=3pt] at (0.5,0) {$\tfrac{1}{2}$};
  \node[gint,above right=2pt] at (0,0) {$2+2i$};
  \node[gint,right=2pt] at (0,0.5) {$i$};
  \node[gint,below right=2pt,xshift=-1pt,yshift=-1pt] at (0,1) {$-2+2i$};
  \node[gint,below=2pt] at (0.5,1) {$-1$};
  \node[gint,below left=2pt,xshift=1pt,yshift=-1pt] at (1,1) {$-2-2i$};
  \node[gint,left=2pt] at (1,0.5) {$-i$};
  \node[gint,above left=2pt] at (1,0) {$2-2i$};
  \node[gint,above=2pt] at (0.5,0) {$1$};
\end{tikzpicture}}{(e) $(1,2+2i,-i,-2+2i)^{\pm}$}
\MFig{\begin{tikzpicture}[x=\ScaleOneOne cm,y=\ScaleOneOne cm]
  \BBoxOneOne\GridOneOne
  \Edge{(0,0)}{(1,0)}\Edge{(1,0)}{(1,1)}\Edge{(1,1)}{(0,1)}\Edge{(0,1)}{(0,0)}
  \foreach \p in {(0,0),(0,1),(1,1),(1,0)} {\Spot{\p}}
  \node[rational,below left=2pt] at (0,0) {$0$};
  \node[rational,above left=2pt] at (0,1) {$i$};
  \node[rational,above right=2pt] at (1,1) {$1$};
  \node[rational,below right=2pt] at (1,0) {$1$};
  \node[gint,above right=2pt] at (0,0) {$1+i$};
  \node[gint,below right=2pt] at (0,1) {$1-i$};
  \node[gint,below left=2pt] at (1,1) {$1+i$};
  \node[gint,above left=2pt] at (1,0) {$1-i$};
\end{tikzpicture}}{(f) $(1+i,1-i)^2$}

\vspace{0.40em}

\MFig{\begin{tikzpicture}[x=\ScaleTwoTwo cm,y=\ScaleTwoTwo cm]
  \BBoxTwoTwo\GridTwoTwo
  \Edge{(-1,-1)}{(0,-1)}\Edge{(0,-1)}{(1,-1)}\Edge{(1,-1)}{(1,0)}\Edge{(1,0)}{(1,1)}\Edge{(1,1)}{(0,1)}\Edge{(0,1)}{(-1,1)}\Edge{(-1,1)}{(-1,0)}\Edge{(-1,0)}{(-1,-1)}
  \foreach \p in {(-1,-1),(-1,0),(-1,1),(0,1),(1,1),(1,0),(1,-1),(0,-1)} {\Spot{\p}}
  \node[rational,below left=3pt] at (-1,-1) {$-1-i$};
  \node[rational,left=3pt] at (-1,0) {$-1$};
  \node[rational,above left=3pt] at (-1,1) {$-1+i$};
  \node[rational,above=3pt] at (0,1) {$i$};
  \node[rational,above right=3pt] at (1,1) {$1+i$};
  \node[rational,right=3pt] at (1,0) {$1$};
  \node[rational,below right=3pt] at (1,-1) {$1-i$};
  \node[rational,below=3pt] at (0,-1) {$-i$};
  \node[gint,above right=3pt,xshift=-2pt] at (-1,-1) {$-1-i$};
  \node[gint,below right=3pt] at (-1,0) {$2i$};
  \node[gint,below right=3pt] at (-1,1) {$1-i$};
  \node[gint,below=2pt,xshift=-4pt] at (0,1) {$2$};
  \node[gint,below left=3pt] at (1,1) {$1+i$};
  \node[gint,below left=3pt] at (1,0) {$-2i$};
  \node[gint,above left=3pt] at (1,-1) {$-1+i$};
  \node[gint,above=2pt,xshift=-6pt] at (0,-1) {$-2$};
\end{tikzpicture}}{(g) $(1+i,2,1-i,2i)^{\pm}$}
\qquad
\MFig{\begin{tikzpicture}[x=\ScaleTwoTwo cm,y=\ScaleTwoTwo cm]
  \BBoxTwoTwo\GridTwoTwo
  \Edge{(-1,-1)}{(0,-1)}\Edge{(0,-1)}{(0.5,-0.5)}\Edge{(0.5,-0.5)}{(1,0)}\Edge{(1,0)}{(1,1)}\Edge{(1,1)}{(0,1)}\Edge{(0,1)}{(-0.5,0.5)}\Edge{(-0.5,0.5)}{(-1,0)}\Edge{(-1,0)}{(-1,-1)}
  \foreach \p in {(-1,-1),(-1,0),(-0.5,0.5),(0,1),(1,1),(1,0),(0.5,-0.5),(0,-1)} {\Spot{\p}}
  \node[rational,below left=3pt] at (-1,-1) {$-1-i$};
  \node[rational,left=3pt] at (-1,0) {$-1$};
  \node[rational,above left=1pt,xshift=10pt,yshift=5] at (-0.5,0.5) {$-\tfrac12+\tfrac12i$};
  \node[rational,above=3pt] at (0,1) {$i$};
  \node[rational,above right=3pt] at (1,1) {$1+i$};
  \node[rational,right=3pt] at (1,0) {$1$};
  \node[rational,below right=1pt,xshift=-6pt,yshift=-3] at (0.5,-0.5) {$\tfrac12-\tfrac12i$};
  \node[rational,below=3pt] at (0,-1) {$-i$};
  \node[gint,above right=2pt,xshift=-1pt] at (-1,-1) {$1+i$};
  \node[gint,below right=2pt] at (-1,0) {$1-2i$};
  \node[gint,below right=2pt,xshift=-3pt,yshift=-1pt] at (-0.5,0.5) {$1+i$};
  \node[gint,below right=2pt] at (0,1) {$2-i$};
  \node[gint,below left=2pt] at (1,1) {$1+i$};
  \node[gint,above left=2pt] at (1,0) {$1-2i$};
  \node[gint,above left=2pt] at (0.5,-0.5) {$1+i$};
  \node[gint,above left=2pt,xshift=1pt] at (0,-1) {$2-i$};
\end{tikzpicture}}{(h) $(1+i,2-i,1+i,1-2i)^2$}

\caption{Loops in the Gaussian Farey graph, where $(x)^\pm=(x,-x)$}
\label{figure6}
\end{figure}

\subsection*{Acknowledgements}
Short and van Son were supported by EPSRC grant EP/W002817/1, and Zabolotskii was supported by EPSRC DTP grant EP/W524098/1. The authors acknowledge the workshops \emph{Frieze patterns in algebra, combinatorics and geometry} (CIRM, May 2025) and \emph{Farey's legacy in frieze patterns and discrete geometry} (ICMS, April 2026) at which work from this paper was shared and developed. The paper benefitted from  interactions with ChatGPT for correcting errors, contributing to proofs, and creating TikZ figures.

%


\begin{bibdiv}
\begin{biblist}

\bib{AgSo2006}{article}{
   author={Aguiar, Marcelo},
   author={Sottile, Frank},
   title={\href{https://doi.org/10.1016/j.jalgebra.2005.06.021}{Structure of the Loday-Ronco Hopf algebra of trees}},
   journal={J. Algebra},
   volume={295},
   date={2006},
   number={2},
   pages={473--511}, 
}
%
\bib{Be2003}{article}{
   author={Beardon, Alan F.},
   title={\href{https://doi.org/10.1023/A:1022116901444}{The Hillam--Thron theorem in higher dimensions}},
   journal={Geom. Dedicata},
   volume={96},
   date={2003},
   pages={205--209},
}
%
\bib{BeHoSh2012}{article}{
   author={Beardon, A. F.},
   author={Hockman, M.},
   author={Short, I.},
   title={\href{https://doi.org/10.1307/mmj/1331222851}{Geodesic continued fractions}},
   journal={Michigan Math. J.},
   volume={61},
   date={2012},
   number={1},
   pages={133--150},
}
%
\bib{CoCo1973}{article}{
   author={Conway, J. H.},
   author={Coxeter, H. S. M.},
  title={\href{https://doi.org/10.2307/3615344}{Triangulated polygons and friezes}},
   journal={Math. Gaz.},
   volume={57},
   date={1973},
   pages={87--94, 175--183},
}
%
\bib{Cu2018}{article}{
   author={Cuntz, M.},
   title={\href{https://doi.org/10.1016/j.jalgebra.2018.01.028}{On subsequences of quiddity cycles and Nichols algebras}},
   journal={J. Algebra},
   volume={502},
   date={2018},
   pages={315--327},
}
%
\bib{DaNo2014}{article}{
   author={Dani, S. G.},
   author={Nogueira, Arnaldo},
   title={\href{https://doi.org/10.1090/S0002-9947-2014-06003-0}{Continued fractions for complex numbers and values of binary
   quadratic forms}},
   journal={Trans. Amer. Math. Soc.},
   volume={366},
   date={2014},
   number={7},
   pages={3553--3583},
}
%
\bib{FeKaSeTu2023}{article}{
   author={Felikson, Anna},
   author={Karpenkov, Oleg},
   author={Serhiyenko, Khrystyna},
   author={Tumarkin, Pavel},
   title={\href{https://doi.org/10.1007/s10711-025-00997-5}{$3d$ Farey graph, lambda lengths and $SL_2$-tilings}},
   journal={Geom. Dedicata},
   volume={219},
   date={2025},
   number={2},
   pages={Paper No. 33},
}
%
%
\bib{Ha2022}{book}{
   author={Hatcher, Allen},
   title={Topology of numbers},
   publisher={American Mathematical Society, Providence, RI},
   date={2022},
   pages={ix+341},
   isbn={978-1-4704-5611-5},
}
%
\bib{Ho2020}{article}{
   author={Hockman, Meira},
   title={\href{https://doi.org/10.1307/mmj/1576033219}{Geodesic Gaussian integer continued fractions}},
   journal={Michigan Math. J.},
   volume={69},
   date={2020},
   number={2},
   pages={297--322},
}
%
\bib{KaUg2005}{article}{
   author={Katok, Svetlana},
   author={Ugarcovici, Ilie},
   title={\href{https://www.mathnet.ru/eng/mmj307}{Geometrically Markov geodesics on the modular surface}},
   journal={Mosc. Math. J.},
   volume={5},
   date={2005},
   number={1},
   pages={135--155},
}
%
\bib{LeTh1942}{article}{
   author={Leighton, Walter},
   author={Thron, W. J.},
   title={\href{https://doi.org/10.1215/S0012-7094-42-00952-9}{Continued fractions with complex elements}},
   journal={Duke Math. J.},
   volume={9},
   date={1942},
   pages={763--772},
}
%
\bib{LoWa2008}{book}{
   author={Lorentzen, Lisa},
   author={Waadeland, Haakon},
   title={Continued fractions. Vol. 1},
   series={Atlantis Studies in Mathematics for Engineering and Science},
   volume={1},
   edition={2},
   note={Convergence theory},
   publisher={Atlantis Press, Paris; World Scientific Publishing Co. Pte.
   Ltd., Hackensack, NJ},
   date={2008},
   pages={xii+308},
}
%
\bib{ScWa1940}{article}{
   author={Scott, W. T.},
   author={Wall, H. S.},
   title={\href{https://doi.org/10.1090/S0002-9947-1940-0001320-1}{A convergence theorem for continued fractions}},
   journal={Trans. Amer. Math. Soc.},
   volume={47},
   date={1940},
   pages={155--172},
}
%
\bib{ShSt2022}{article}{
   author={Short, Ian},
   author={Stanier, Margaret},
   title={\href{https://doi.org/10.1090/proc/15574}{Necessary and sufficient conditions for convergence of integer
   continued fractions}},
   journal={Proc. Amer. Math. Soc.},
   volume={150},
   date={2022},
   number={2},
   pages={617--631},
}
%
\bib{Ti1911}{article}{
   author={Tietze, Heinrich},
   title={\href{https://doi.org/10.1007/BF01461159}{\"Uber Kriterien f\"ur Konvergenz und Irrationalit\"at unendlicher Kettenbr\"uche}},
   journal={Math. Ann.},
   volume={70},
   date={1911},
   number={2},
   pages={236--265},
}


\end{biblist}
\end{bibdiv}
\end{document}